\documentclass[a4paper,12pt]{article}

\usepackage{amsmath,amsfonts,amssymb}
\usepackage{amsthm}
\usepackage{color,multicol}

\newtheorem{lemma}{Lemma}[section]
\newtheorem{theorem}[lemma]{Theorem}

\newtheorem{remark}[lemma]{Remark}

\parindent=0pt

\begin{document}

\title{An estimate for the Morse index of a Stokes wave}

\author{Eugene Shargorodsky\footnote{E-mail: \ eugene.shargorodsky@kcl.ac.uk} \\
Department of Mathematics, King's College London,\\
Strand, London WC2R 2LS, UK}

\date{}

\maketitle

\begin{abstract}
Stokes waves are steady periodic water waves on the 
free surface of an infinitely deep irrotational two dimensional flow under gravity 
without surface tension. They can be 
described in terms of solutions of the Euler-Lagrange 
equation of a certain functional. This allows one to 
define the Morse index of a Stokes wave. It is well 
known that if the Morse indices of the elements of a 
set of non-singular Stokes waves are bounded, then 
none of them is close to a singular one. The paper presents 
a quantitative variant of this result.
\end{abstract}

\section{Introduction}

A Stokes wave is a steady periodic wave, propagating under gravity with
constant speed on the surface of an infinitely deep irrotational
two dimensional flow. Its free surface is determined by the Laplace
equation, kinematic and periodic boundary conditions and by a dynamic
boundary condition given by the requirement that pressure in the flow
at the surface should be constant (Bernoulli's theorem). A mathematical 
model for Stokes waves can be described as follows. 

Let $\Omega \subset \mathbb{C}$ denote the domain below a curve $\mathcal{S}$ in the
$(X,Y)$-plane, where
\begin{eqnarray*}
&& \mathcal{S} := \{(x(s), y(s)) \,:\, \ s \in \mathbb{R}\},  \\
&& (x, y) \text{ is injective and absolutely continuous, } \\
&& x'(s)^2 + y'(s)^2 > 0  \text{ for almost all } s , \\
&& s \mapsto (x(s) - s, y(s)) \text{ is } 2\pi\text{--periodic. }
\end{eqnarray*}
The Stokes waves are solutions of the following free boundary problem:
find $\mathcal{S}$ for which there exists $\psi \in
C(\overline{\Omega})\bigcap C^2(\Omega)$ such that
\begin{eqnarray}
&& \Delta \psi = 0 \text{ in } \Omega ,
\label{Pr1}\\
&& \psi  \text{ is } 2\pi\text{--periodic in } X, \label{Pr2}\\
&& \nabla\psi(X,Y) \to (0, 1) \text{ as } Y \to -\infty \ \text{
uniformly in } X,
\label{Pr3}\\
&& \psi = 0 \text{ on } \mathcal{S}, \label{Pr4} \\
&& |\nabla\psi(X,Y)|^2  = 1 - 2\mu Y  \text{ almost everywhere on } \mathcal{S} .
\label{Pr5}
\end{eqnarray}
Here $\mu^{-1/2} > 0$ is the Froude number, a dimensionless
combination of  speed, wavelength and gravitational acceleration. If $(\mathcal{S}, \psi)$ 
is a solution of \eqref{Pr1}-\eqref{Pr5} such that $1 - 2\mu Y > 0$ everywhere on $\mathcal{S}$, then
$\mathcal{S}$ and $\psi$ are real analytic (\cite{Tol}, see also \cite{ST3}). 
We call such solutions regular or non-singular. The famous Stokes wave of extreme form which has
a stagnation point and a corner containing an angle of $120^\circ$
at its crest is a singular solution, and $1 - 2\mu Y = 0$ at the crest. Non-singular solutions
of \eqref{Pr1}-\eqref{Pr5} are in one-to-one correspondence with the critical points 
$v \in W^{1, 3}_{2\pi}$ of the
functional 
\begin{equation}\label{functional}
\mathcal{J}(v) = \mathcal{J}_\mu(v) := 
\int_{-\pi}^{\pi} \Big(v(t) \mathcal{C} v'(t) - \mu v^2(t) (1 + \mathcal{C} v'(t))\Big) dt, \ \ \ 
v \in W^{1, 2}_{2\pi}
\end{equation}
(see \cite{BDT1, ST3, Tol1}),
where $\mathcal{C} u$ denotes the periodic Hilbert transform of a
$2\pi$-periodic function $u: \mathbb{R} \to \mathbb{R}$:
\begin{equation}\label{PerHilb}
\mathcal{C} u(t) = \frac1{2\pi} \int_{-\pi}^{\pi} u(s)
\cot  \frac{t - s}2\, ds 
\end{equation}
(see Section \ref{App} for the notation of function spaces and for more information on $\mathcal{C}$).
This allows one to define the Morse index of a non-singular Stokes wave. Let $v$ be a critical
point of $\mathcal{J}$. Consider the quadratic form of the second Fr\'echet derivative 
$\mathcal{J}''(v)$ (the Hessian): 
\begin{equation}\label{Hessian}
\mathcal{Q}_v[u] := 2 \int_{-\pi}^{\pi} \Big((1 - 2\mu v(t)) u(t) \mathcal{C} u'(t) - 
\mu (1 + \mathcal{C} v'(t))u^2(t)\Big) dt .
\end{equation}
The Morse index $\mathcal{M}(v)$ of $v$ and of the corresponding Stokes wave is the number 
$N_-(\mathcal{Q}_v)$ which is defined as follows. 

Let $\mathcal{H}$ be a Hilbert space and let $\mathbf{q}$ be a Hermitian form with a domain
$\mbox{Dom}\, (\mathbf{q}) \subseteq \mathcal{H}$. Set
\begin{equation}\label{N-}
N_- (\mathbf{q}) := \sup\left\{\dim \mathcal{L}\, | \  \mathbf{q}[u] < 0, \ 
\forall u \in \mathcal{L}\setminus\{0\}\right\} ,
\end{equation}
where $\mathcal{L}$ denotes a linear subspace of $\mbox{Dom}\, (\mathbf{q})$. If $\mathbf{q}$
is the quadratic form of a self-adjoint operator $A$ with no essential spectrum in $(-\infty, 0)$, then
$N_- (\mathbf{q})$ is the number of negative eigenvalues of $A$ repeated according to their
multiplicity (see, e.g., \cite[S1.3]{BerShu} or \cite[Theorem 10.2.3]{BirSol}).

Every critical point $v \in W^{1, 3}_{2\pi}$ of \eqref{functional} is a real analytic function corresponding
to a nonsingular Stokes wave $(\mathcal{S}, \psi)$ and
$$
\min_{t \in \mathbb{R}}\, (1 - 2\mu v(t)) = \min_{(X, Y) \in \mathcal{S}} (1 - 2\mu Y) >  0 
$$
(see \cite{BDT1, ST3}). Let
\begin{eqnarray}\label{nudef}
&& \nu(v) := \max_{t \in \mathbb{R}}\, \frac{2\mu}{1 - 2\mu v(t)} =
\frac{2\mu}{\min_{t \in \mathbb{R}}\, (1 - 2\mu v(t))}\, , \nonumber \\
&& \nu_0(v) := \frac{1}{\min_{t \in \mathbb{R}}\, (1 - 2\mu v(t))} =
\frac{\nu(v)}{2\mu}\, .
\end{eqnarray}

Let $v_k \in W^{1, 3}_{2\pi}$ be a  critical
point of $\mathcal{J}_{\mu_k}$, $k \in \mathbb{N}$ (see \eqref{functional}).
It is well known that if $\nu(v_k) \to \infty$, then $\mathcal{M}(v_k) \to \infty$
as $k \to \infty$ (see \cite{BDT2, Pl, ST3}). In other words, if the Morse indices of the elements of a 
set of non-singular Stokes waves are bounded, then none of them is close to a singular one. 
The following quantitative version of this statement is the main result of the paper. 
\begin{theorem}\label{Main}
There exist constants $M_1, M_2 > 0$ such that
\begin{equation}\label{Mainest}
M_1 \ln^{1/3}(1 + \nu(v)) \le \mathcal{M}(v) \le 1 + M_2\, \nu(v) \ln(2 + \nu_0(v))
\end{equation}
holds for every critical point $v \in W^{1, 3}_{2\pi}$ of \eqref{functional} and every $\mu > 0$.
\end{theorem}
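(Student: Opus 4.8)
The plan is to analyze the quadratic form $\mathcal{Q}_v$ in \eqref{Hessian} by rewriting it in a way that isolates the essential spectral behavior and comparing it against model forms whose negative indices are computable. Writing $a(t) := 1 - 2\mu v(t) > 0$ and $b(t) := \mu(1 + \mathcal{C}v'(t))$, note that, since $v$ is a critical point, the Euler--Lagrange equation gives a relation between $\mathcal{C}v'$ and $v$; in particular $b(t)$ is controlled in terms of $a$ and $\mu$, and $\min a = 2\mu/\nu(v)$, $\max(1/a) = \nu_0(v)$. The operator associated with the principal part $u \mapsto a(t)\,\mathcal{C}u'$ is, after symmetrization, comparable to $|D|$ weighted by $a$; the key point is that $\int a\, u\,\mathcal{C}u' \,dt$ behaves like $\sum_n a_{\mathrm{eff}}(n)\,n\,|\hat u(n)|^2$ at high frequency. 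Thus $N_-(\mathcal{Q}_v)$ is governed by the competition between the positive high-frequency term $\sim \min(a)\, |D|$ and the negative zeroth-order term $-2b(t)$, which can be as large as $\sim \nu(v)$ in magnitude where $a$ is small.

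\smallskip

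\emph{Upper bound.} I would use the variational characterization \eqref{N-} together with a Glazman-type splitting. Cover $[-\pi,\pi]$ by intervals on which $a$ does not vary by more than a fixed factor; on each such interval the form is bounded below by a local model form $c\,\sigma\int |D|^{1/2}u|^2 - C\nu(v)\int u^2$ where $\sigma$ is the local lower bound for $a$. The number of negative directions of such a one-dimensional-symbol form on an interval of length $\ell$ is $O(1 + \ell\,\nu(v)/\sigma)$ by a Cwikel--Lieb--Rozenblum or Birman--Schwinger estimate for the operator $\sigma|D| - C\nu(v)$ on the interval — and crucially, where $\sigma$ is small we can take $\ell$ correspondingly small because $v$ is analytic with derivative bounds that degenerate only logarithmically as $\nu_0 \to \infty$ (this is where the $\ln(2+\nu_0(v))$ factor enters: the width of the "near-singular" region near the crest shrinks, but only up to a logarithmic correction, so summing the local counts yields $1 + M_2\,\nu(v)\ln(2+\nu_0(v))$). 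The additive $1$ absorbs the one low-frequency/constant direction that must be handled separately. The main obstacle here is making the local lower bounds on the nonlocal form $\int a\,u\,\mathcal{C}u'$ rigorous — the Hilbert transform is not local, so the splitting requires a commutator estimate showing $[\mathcal{C}, \chi]$ is smoothing enough that localization costs only lower-order (bounded-rank) errors.

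\smallskip

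\emph{Lower bound.} For the lower estimate $\mathcal{M}(v) \ge M_1 \ln^{1/3}(1+\nu(v))$ I would exhibit an explicit subspace $\mathcal{L}$ of dimension $\sim \ln^{1/3}(1+\nu(v))$ on which $\mathcal{Q}_v < 0$. Near the point $t_0$ where $a(t_0) = \min a = 2\mu/\nu(v)$, choose trial functions supported in a small neighborhood, e.g. modulated bumps $u_j(t) = \phi((t-t_0)/\delta)\,e^{i\omega_j(t-t_0)}$ (real parts thereof) at a geometric sequence of frequencies $\omega_j$, with $\delta$ small. For such $u_j$ the positive term contributes $\sim a(t_0)\,\omega_j \sim 2\mu\,\omega_j/\nu(v)$ while the negative term contributes $\sim -\mu$; negativity forces $\omega_j \lesssim \nu(v)$, and the number of geometrically-spaced frequencies available, combined with an uncertainty-principle lower bound $\delta \gtrsim 1/\omega$ and the requirement that the analytic bound on $|a'|$ not spoil the estimate over the support (which restricts $\delta$ by a power of $1/\ln\nu$), yields a dimension count of order $\ln^{1/3}(1+\nu(v))$ rather than $\ln\nu(v)$ — the exponent $1/3$ reflecting the cube in $W^{1,3}_{2\pi}$ and the cubic interplay between the bump width, the frequency range, and the overlap control needed to keep the $u_j$ forming a form-negative system. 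The delicate part is controlling the off-diagonal terms $\mathcal{Q}_v[u_i, u_j]$ for $i \neq j$ coming from the nonlocality of $\mathcal{C}$, so that the full form on $\mathrm{span}\{u_j\}$ stays negative definite; this needs the frequency separation to be large enough that $\mathcal{C}$ essentially decouples the modes, which is exactly what forces the logarithmic (rather than polynomial) and then the cube-root size of $\mathcal{L}$.
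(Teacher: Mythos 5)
Your proposal bypasses the paper's actual route (Plotnikov's transformation reducing $\mathcal{Q}_v$ to the form $\mathbf{q}_V$ of \eqref{form}, then the two-sided bound \eqref{est} plus estimates of $\|V\|_{L^1_{2\pi}}$ and $\|V\|_{\mathcal{B},[-\pi,\pi]}$), and the substitutes you offer have genuine gaps at exactly the hard points. For the lower bound, the exponent $1/3$ does not come from ``the cube in $W^{1,3}_{2\pi}$'' or from an uncertainty-principle/overlap count for modulated bumps; in the paper it is $\varrho/(\varrho+2)$ with $\varrho=1$, i.e.\ it records the linear vanishing rate of $\lambda(y)=1-2\mu y$, and it enters through the estimate $\|V\|_{L^1_{2\pi}}\gtrsim \ln^{1/3}(1+\nu(v))$ (Lemma \ref{ST3est}, resting on Lemma 4.25 of \cite{ST3}) combined with $N_-(\mathbf{q}_V)\gtrsim\|V\|_{L^1_{2\pi}}$ from \cite{GNY}. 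Your bump construction needs uniform quantitative control of $a=1-2\mu v$ and of $1+\mathcal{C}v'$ near the crest for all near-extreme waves; you assert such control (``derivative bounds that degenerate only logarithmically'') without proof, and it is false in that form --- the known bounds are polynomial in $\nu_0(v)$, e.g.\ $\|1+\mathcal{C}v'\|_{L^\infty_{2\pi}}\le\sqrt{\nu_0(v)}$. With the actual near-crest behaviour ($a\sim\max(a_{\min},|t|^{2/3})$, $1+\mathcal{C}v'\sim a^{-1/2}$) your positive and negative contributions are \emph{comparable at every dyadic scale}, so negativity of each trial direction is a borderline question of constants; no mechanism in your sketch decides it, and even granting it the naive count would give $\ln\nu$, not $\ln^{1/3}\nu$ --- the step you wave at to cut $\ln$ down to $\ln^{1/3}$ has no identified content. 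This is precisely the deep part (Plotnikov's argument, quantified in \cite{ST3}), and it cannot be recovered by frequency bookkeeping alone.

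The upper bound has two analogous problems. First, the source of the factor $\ln(2+\nu_0(v))$ is misidentified: in the paper it comes from Zygmund's $L\log L$ inequality for the conjugate function applied to $1+\mathcal{C}v'$ (via \eqref{1infty}, \eqref{ZygOrlineq}, \eqref{ZygOrlpos} and $\|1+\mathcal{C}v'\|_{L^\infty_{2\pi}}\le\sqrt{\nu_0(v)}$), giving $\|V\|_{\mathcal{B},[-\pi,\pi]}\lesssim\nu(v)\ln(2+\nu_0(v))$, which is then fed into the Orlicz-norm CLR-type bound $N_-(\mathbf{q}_V)\le C_2\|V\|_{\mathcal{B},[-\pi,\pi]}+1$ of Theorem \ref{Latour} (proved by extension to $\mathbb{R}^2$ and the Solomyak-type Theorem \ref{CLRth}); it does not come from ``the width of the near-singular region shrinking logarithmically'' (that width shrinks polynomially). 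Second, your Glazman splitting of the nonlocal form $\int a\,u\,\mathcal{C}u'$ hinges on the claim that the commutators $[\mathcal{C},\chi]$ cost ``only lower-order (bounded-rank) errors''; they are smoothing but not finite rank, and controlling them uniformly over a family of intervals whose lengths are adapted to the degenerating weight $a$ is exactly where a naive localization loses more than the admissible logarithm --- an $L^1$-type bound on the potential cannot control $N_-$ for $|D|$-type operators (this is why the paper works in the Orlicz class $L_{\mathcal{B}}$, i.e.\ $L\log L$). Finally, your appeal to ``the Euler--Lagrange equation gives a relation'' controlling $b=\mu(1+\mathcal{C}v')$ in terms of $a$ is never made precise; without the quantitative facts from \cite{ST3} (or some substitute), both halves of your argument remain heuristic.
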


In fact, we prove in Section \ref{1.1} a more general result which holds for Bernoulli free-boundary
 problems. Plotnikov's transformation (\cite{Pl}, see also \cite{BDT2})
allows one to pass from \eqref{Hessian} to a simpler quadratic form
\begin{equation}\label{form}
\mathbf{q}_V[u] := \int_{-\pi}^{\pi} \Big((\mathcal{C} u'(t)) u(t) - V(t) u^2(t)\Big) dt , \ \ \ 
u \in H^{1/2, 2}_{2\pi} \cap C_{2\pi}\, ,
\end{equation}
where the potential $V \ge 0$, $V \in L^1_{2\pi}$ is determined by $v$. It is convenient for us
to extend the domain of $\mathbf{q}_V$ from $W^{1, 2}_{2\pi}$ to $H^{1/2, 2}_{2\pi} \cap C_{2\pi}$.
An easy approximation argument shows that this does not affect the Morse index. 

In order to state the estimate for $N_- (\mathbf{q}_V)$,
we need some notation from the theory of Orlicz  spaces (see \cite{KR, RR}).
Let $(\Omega, \Sigma, \mu)$ be a measure space, let $\Phi$ and
$\Psi$ be mutually complementary $N$-functions, and let $L_\Phi(\Omega)$, 
$L_\Psi(\Omega)$ be the corresponding Orlicz spaces. (These spaces are denoted by 
$L^*_\Phi(\Omega)$,  $L^*_\Psi(\Omega)$ in \cite{KR}, where $\Omega$ is assumed to be a closed
bounded subset of $\mathbb{R}^d$
equipped with the standard Lebesgue measure.) We will use the following
norms on $L_\Psi(\Omega)$ 
\begin{equation}\label{Orlicz}
\|f\|_{\Psi} = \|f\|_{\Psi, \Omega} = \sup\left\{\left|\int_\Omega f g d\mu\right| : \ 
\int_\Omega \Phi(g) d\mu \le 1\right\}
\end{equation}
and
\begin{equation}\label{Luxemburg}
\|f\|_{(\Psi)} = \|f\|_{(\Psi, \Omega)} = \inf\left\{\kappa > 0 : \ 
\int_\Omega \Psi\left(\frac{f}{\kappa}\right) d\mu \le 1\right\} .
\end{equation}
These two norms are equivalent
\begin{equation}\label{Luxemburgequiv}
\|f\|_{(\Psi)} \le \|f\|_{\Psi} \le 2 \|f\|_{(\Psi)}\, , \ \ \ \forall f \in L_\Psi(\Omega) .
\end{equation}
We will need the following pair of mutually complementary $N$-fuctions
\begin{equation}\label{thepair}
\mathcal{A}(s) = e^{|s|} - 1 - |s| , \ \ \ \mathcal{B}(s) = (1 + |s|) \ln(1 + |s|) - |s| , \ \ \ s \in \mathbb{R} .
\end{equation}

\begin{theorem}\label{Latour}
There exist  constants $C_1, C_2 > 0$ such that
\begin{equation}\label{est}
C_1 \|V\|_{L^1_{2\pi}} \le N_- (\mathbf{q}_V) \le C_2 
\|V\|_{\mathcal{B}, [-\pi, \pi]}  + 1,  \ \ \ 
\forall V \in L^1_{2\pi} , \ V \ge 0 . 
\end{equation}
\end{theorem}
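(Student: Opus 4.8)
The plan is to establish the two bounds in \eqref{est} separately, treating the lower bound as the easier ``test subspace'' direction and the upper bound as the genuinely hard analytic estimate.

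\medskip

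\textbf{Lower bound.} To get $N_-(\mathbf{q}_V) \ge C_1 \|V\|_{L^1_{2\pi}}$, I would construct an explicit family of trial functions on which $\mathbf{q}_V$ is negative definite. The idea is localization: partition $[-\pi,\pi]$ into $N$ arcs $I_1, \dots, I_N$ on each of which $\int_{I_j} V \ge c \|V\|_{L^1}/N$, and associate to each arc a bump function $u_j$ supported (in a suitable soft sense) near $I_j$. Since $\mathcal{C}$ is a zero-order (bounded) operator on $H^{1/2}$ but the quadratic form $\int u\,\mathcal{C} u'$ is the $\dot H^{1/2}$-seminorm squared (up to constants), a function concentrated at scale $\delta = 2\pi/N$ has positive part of size $O(1)$ regardless of $\delta$, so this must be balanced against $\int V u_j^2 \approx \|u_j\|_\infty^2 \int_{I_j} V$. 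Choosing $\|u_j\|_\infty$ appropriately and checking near-orthogonality of the positive parts across the $N$ bumps (the Hilbert transform has good off-diagonal decay, so cross terms are controlled), one gets an $N$-dimensional subspace with $N \approx \|V\|_{L^1}$ on which the form is negative. The one subtlety is that the $\dot H^{1/2}$ energy of a single sharp bump is only $O(1)$ but \emph{not} $o(1)$, so one cannot take $N$ arbitrarily large relative to $\|V\|_{L^1}$ — which is exactly why the bound is linear in $\|V\|_{L^1}$ and not better.

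\medskip

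\textbf{Upper bound.} This is the main obstacle. The strategy is a Birman--Schwinger / Cwikel-type argument adapted to the half-derivative form. Write $\mathbf{q}_V[u] = \langle (\Lambda - V) u, u\rangle$ where $\Lambda$ is the operator with symbol $|n|$ on Fourier side (so $\langle \Lambda u, u\rangle = \int u\,\mathcal{C} u'$, the $\dot H^{1/2}$ form), modulo the kernel of $\Lambda$ (the constants), which accounts for the ``$+1$'' in \eqref{est}. Then $N_-(\mathbf{q}_V)$ equals the number of eigenvalues $\ge 1$ of the Birman--Schwinger operator $K_V := V^{1/2} \Lambda^{-1} V^{1/2}$ (on the orthogonal complement of constants, with $\Lambda^{-1}$ interpreted accordingly). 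The kernel of $\Lambda^{-1}$ on the circle is $\sim -\frac{1}{\pi}\ln|t-s|$ plus a bounded term, i.e.\ a logarithmically singular (borderline) integral operator. Estimating $\# \{\lambda_j(K_V) \ge 1\}$ in terms of $V$ therefore requires an Orlicz-space Cwikel estimate for integral operators with logarithmic kernels; this is precisely where the $N$-function $\mathcal{B}(s) = (1+|s|)\ln(1+|s|) - |s|$ and its complement $\mathcal{A}(s) = e^{|s|} - 1 - |s|$ enter. The key analytic input I would use is that multiplication by $V^{1/2}$ maps $L^2$ into $L_\Phi$ for the appropriate $\Phi$ with norm controlled by $\|V\|_{\mathcal{B}}^{1/2}$ (by the Orlicz Hölder inequality and the duality \eqref{Orlicz}), combined with the fact that $\Lambda^{-1}$ maps $L_\Phi$ back into (a space close to) $L^2$ because its kernel's singularity is only logarithmic — an endpoint/borderline version of the Hardy--Littlewood--Sobolev inequality. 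The number of eigenvalues exceeding $1$ is then bounded by the relevant Schatten-type or weak-type quasinorm, which by the above chain is $\lesssim \|V\|_{\mathcal{B}} + 1$.

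\medskip

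I expect the hard part to be making the borderline Cwikel estimate quantitatively correct with the \emph{exact} Orlicz pair $(\mathcal{A}, \mathcal{B})$: naive Hölder loses logarithms, and one must exploit that the logarithmic singularity of the kernel of $\Lambda^{-1}$ is ``dual'' to the exponential Orlicz class, so that the composition $V^{1/2} \Lambda^{-1} V^{1/2}$ lands in the trace ideal (or weak trace ideal) with the stated norm. A clean route is to dyadically decompose the kernel of $\Lambda^{-1}$ according to the distance $|t-s|$, estimate the piece at scale $2^{-k}$ as a bounded operator on $L^2$ with norm $O(k)$ times a rank-type bound $O(2^k)$, and sum against the Orlicz norm of $V$; the series converges precisely because $\mathcal{B}$ grows like $s\ln s$. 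Everything else — reducing $\mathbf{q}_V$ to a form on the complement of constants, justifying the Birman--Schwinger correspondence for this (form-bounded, not operator-bounded) perturbation, and the density/approximation needed to pass from smooth $V$ to general $V \in L^1_{2\pi}$ with $V \ge 0$ — is routine and I would relegate it to lemmas.
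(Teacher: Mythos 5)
Your overall architecture (treat the lower bound by localization, the upper bound by a borderline Cwikel-type argument with the pair $(\mathcal{A},\mathcal{B})$) points in a sensible direction, but both halves have genuine gaps, and in each case the missing piece is exactly what the paper imports from elsewhere.

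\textbf{Lower bound.} The claim that a bump ``concentrated at scale $\delta$ has positive part of size $O(1)$ regardless of $\delta$'' is only true when the transition regions of the bump are comparable to its plateau: a continuous function equal to $h$ on an arc of length $w$ and vanishing outside a slightly larger arc, with ramps of length $\epsilon$, has $\int u\,\mathcal{C}u' \asymp h^2\ln(2+w/\epsilon)$, not $O(h^2)$. Consequently, for an equal-$V$-mass partition whose arcs have wildly different lengths next to one another (mass concentrated hierarchically near the partition points, clusters within clusters with rapidly shrinking relative gaps), you cannot simultaneously have disjoint supports, $O(1)$ energy per trial function, and capture of a fixed fraction of each arc's mass; a hierarchical example of this kind shows the disjoint-bump construction certifies only $o(\|V\|_{L^1_{2\pi}})$ negative directions (on the order of $\|V\|_{L^1_{2\pi}}^{1/2}$ in the worst case), even though the theorem is true. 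Obtaining a constant $C_1$ independent of how $V$ is distributed is the real content here, and it is not a routine localization: the paper handles it by extending trial functions harmonically to the disk (so that $\mathbf{q}_V$ is dominated by a local Dirichlet-type form via the trace inequality) and then invoking Theorem 4.1 of \cite{GNY} (decomposition of a measure space by capacitors), together with an injectivity-of-trace argument and the trial function $u\equiv 1$ to remove the integer part. If you want a self-contained construction you would have to reproduce something of that strength.

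\textbf{Upper bound.} You correctly identify the borderline character and the role of $(\mathcal{A},\mathcal{B})$, but the core step --- ``decompose the kernel of $\Lambda^{-1}$ dyadically, estimate the scale-$2^{-k}$ piece by a norm $O(k)$ times a rank-type bound $O(2^k)$, and sum against $\|V\|_{\mathcal{B}}$'' --- is a heuristic, not an argument: summing rank bounds $O(2^k)$ runs down to the finest scale on which $V$ lives and gives a bound depending on that scale rather than on $\|V\|_{\mathcal{B},[-\pi,\pi]}$, and the mechanism by which the local distribution of $V$ tells you where to stop refining is precisely what is missing. Supplying it is essentially Solomyak's method \cite{Sol}: cover the interval by finitely many subintervals on which the averaged Orlicz norm $\|V\|^{\rm (av)}_{\mathcal{B},I_k}$ is equidistributed (a Besicovitch-type covering argument), use on each box the trace theorem plus the Yudovich--Pohozhaev--Trudinger embedding and Orlicz--H\"older duality with a Poincar\'e normalization, and finish with a dimension count. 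The paper packages this as Theorem \ref{CLRth} (a CLR/Solomyak-type bound for two dimensional Schr\"odinger forms with potentials supported on Lipschitz arcs) and reduces $\mathbf{q}_V$ to it by an extension operator $R$ with $\|Ru\|^2_{W^1_2(\mathbb{R}^2)}\le c_2(\mathcal{E}_0[u]+\|u\|^2_{L^2_{2\pi}})$, treating small $\|V\|_{\mathcal{B}}$ separately to get the additive $1$. Your Birman--Schwinger framing is not wrong in principle (and the reduction modulo constants, which you flag as routine, is indeed manageable), but as written the decisive eigenvalue-counting estimate is asserted rather than proved, so the proposal does not yet constitute a proof of either inequality in \eqref{est}.
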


The above theorem answers the question posed by the author at the LMS
Durham Symposium ``Operator Theory and Spectral Analysis" in 2005:
is there a function $h : \mathbb{R}_+ \to
\mathbb{R}_+$ such that $h(\tau) \to +\infty$ as $\tau \to
+\infty$ and
$$
N_- (\mathbf{q}_V) \ge
h\left(\|V\|_{L^1_{2\pi}}\right) , \ \ \  \forall V \in C^\infty_{2\pi}, \ V \ge 0 ?
$$
Although he
offered a ``decent bottle of wine" for the first correct solution, no one came up with an
answer. After repeating the question and the offer in several later talks, the 
author decided in 2007 to rise the stakes and to upgrade a ``decent bottle of wine" to 
a bottle of Chateau Latour which is arguably the best wine in the world. Although this
much more attractive offer was repeated in several talks since then and also in \cite{S2}, 
the bottle  of Chateau Latour remained unclaimed. It turns out that the latter was just pure luck
as the lower estimate in \eqref{est} can be derived from \cite{GNY} (see also
\cite{GY}).  The proof of the upper estimate in \eqref{est} is more difficult and relies upon methods developed in \cite{Sol} (see Section \ref{CLR} below).

\section{Notation and auxiliary results}\label{App} 

The subscript $2\pi$ is used throughout the paper to denote spaces of $2\pi$--periodic 
real valued functions: $C_{2\pi}$ is the Banach space of real valued continuous 
$2\pi$--periodic functions; $C^\infty_{2\pi}$ is the subspace of $C_{2\pi}$ consisting of
infinitely smooth functions;
$L^p_{2\pi}$, $p \ge 1$ is the Banach space of real valued 
locally $p^{\mathrm{th}}$--power summable $2\pi$--periodic functions;
$W^{1,p}_{2\pi}$ is the Banach space of absolutely continuous,
$2\pi$--periodic functions $u$ with weak first derivatives $u' \in L^p_{2\pi}$;
$H^{1/2, 2}_{2\pi} = W^{1/2, 2}_{2\pi}$ is the subspace of $L^2_{2\pi}$ consisting
of functions $u$ such that
$$
\|u\|_{W^{1/2, 2}_{2\pi}}^2 := \int_{-\pi}^\pi \int_{-\pi}^\pi 
\left(\frac{u(t) - u(s)}{\sin\frac{1}{2} (t - s)}\right)^2 dt\, ds +
\|u\|_{L^2_{2\pi}}^2 < +\infty .
$$
The following is an equivalent norm on $H^{1/2, 2}_{2\pi}$:
$$
\|u\|_{H^{1/2, 2}_{2\pi}} := 
\left(\sum_{n \in \mathbb{Z}} (1 + |n|) \left|\hat{u}(n)\right|^2\right)^{1/2} ,
$$
where $\hat{u}(n)$ are the Fourier coefficients of $u$ (see, e.g., \cite{Tol0}).

The periodic Hilbert transform \eqref{PerHilb} is bounded in $L^p_{2\pi}$
if $1 < p < \infty$ (M. Riesz theorem), and
\begin{eqnarray}
& \widehat{\mathcal{C}u}(0) = 0 , \ \ \ \widehat{\mathcal{C}u}(n) = -i \,\mbox{sign}(n) \hat{u}(n) , \
n \in \mathbb{Z}\setminus\{0\} , \ \ \ u \in L^p_{2\pi} , \label{CFour} \\
& \mathcal{C}^2u = - u + \hat{u}(0) . \label{C2}
\end{eqnarray}
It follows from \eqref{CFour} that
\begin{equation}\label{FirstOrd}
\widehat{\mathcal{C}u'}(n) = |n| \hat{u}(n) , \ n \in \mathbb{Z} , \ \ \ u \in W^{1,p}_{2\pi} ,
\end{equation}
and hence
$$
\left(\int_{-\pi}^{\pi} \Big((\mathcal{C} u'(t)) u(t) + u^2(t)\Big) dt\right)^{1/2}
$$
is an equivalent norm on $H^{1/2, 2}_{2\pi}$.

Let $\Psi$ be an $N$-function (\cite{KR, RR}). Then
\begin{equation}\label{LuxNormImpl}
\int_\Omega \Psi\left(\frac{f}{\kappa_0}\right) d\mu \le C_0, \ \ C_0 \ge 1  \ \ \Longrightarrow \ \
\|f\|_{(\Psi)} \le C_0 \kappa_0 . 
\end{equation}
Indeed, since $\Psi$ is even, convex and increasing on 
$[0, +\infty)$, and $\Psi(0) = 0$, we get for any $\kappa \ge C_0 \kappa_0$,
$$
\int_{\Omega} \Psi\left(\frac{f}{\kappa}\right) d\mu \le
\int_{\Omega} \Psi\left(\frac{f}{C_0 \kappa_0}\right) d\mu \le
\frac{1}{C_0} \int_{\Omega} \Psi\left(\frac{f}{\kappa_0}\right) d\mu \le 1 .
$$

We will use the following standard notation
$$
a_+ := \max\{0, a\}, \ \ \ a \in \mathbb{R} .
$$

\begin{lemma}\label{elem}
$\frac12\, s\ln_+ s \le \mathcal{B}(s) \le s + 2s\ln_+ s$, \ $\forall s \ge 0$.
\end{lemma}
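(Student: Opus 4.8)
The plan is to prove the two inequalities separately, splitting each at $s = 1$ and exploiting that $\ln_+ s = 0$ on $[0,1]$ while $\ln_+ s = \ln s$ on $[1,\infty)$. Throughout I would use the closed form $\mathcal{B}(s) = (1+s)\ln(1+s) - s$ for $s \ge 0$ together with $\mathcal{B}(0) = 0$ and $\mathcal{B}'(s) = \ln(1+s) \ge 0$, so that $\mathcal{B}$ is nonnegative and nondecreasing on $[0,\infty)$.

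For the upper bound $\mathcal{B}(s) \le s + 2s\ln_+ s$: on $[0,1]$ I would use $\ln(1+s) \le s$, which gives $\mathcal{B}(s) \le (1+s)s - s = s^2 \le s$; on $[1,\infty)$ I would use $1 + s \le 2s$ and $\ln(1+s) \le \ln 2 + \ln s \le 1 + \ln s$, which gives $\mathcal{B}(s) \le 2s(1 + \ln s) - s = s + 2s\ln s$. For the lower bound $\tfrac12 s\ln_+ s \le \mathcal{B}(s)$: on $[0,1]$ the left-hand side vanishes and $\mathcal{B} \ge 0$, so nothing is needed.

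The only step requiring real care — and the one I expect to be the main obstacle — is the lower bound on $[1,\infty)$: the crude estimate $(1+s)\ln(1+s) \ge s\ln s$ reduces the claim to $s\ln s - s \ge \tfrac12 s\ln s$, i.e. $\ln s \ge 2$, which fails for $1 \le s < e^2$. To fix this I would set $g(s) := \mathcal{B}(s) - \tfrac12 s\ln s$ and show $g \ge 0$ on $[1,\infty)$ by noting that $g(1) = 2\ln 2 - 1 > 0$ and that $g'(s) = \ln(1+s) - \tfrac12\ln s - \tfrac12 \ge 0$ for $s \ge 1$; the latter is equivalent to $(1+s)^2 \ge es$, i.e. to $s^2 + (2 - e)s + 1 \ge 0$, which holds for every real $s$ because its discriminant $e^2 - 4e$ is negative. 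Hence $g$ is nondecreasing on $[1,\infty)$ and $g(s) \ge g(1) > 0$, giving $\mathcal{B}(s) \ge \tfrac12 s\ln s$. Combining the four cases yields the lemma.
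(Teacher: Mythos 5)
Your proof is correct and is essentially the paper's own argument: the upper bound is handled identically (via $\ln(1+s)\le s$ on $[0,1]$ and $1+s\le 2s$ on $[1,\infty)$), and your monotonicity argument for $g(s)=\mathcal{B}(s)-\tfrac12 s\ln s$ is exactly the paper's step of integrating the pointwise inequality $1+\ln s\le 2\ln(1+s)$ from $1$ to $s$. The only cosmetic difference is how that key inequality is verified — you check $(1+s)^2\ge es$ via the discriminant, while the paper uses $\ln(es)<\ln(1+3s)\le 2\ln(1+s)$ — so no further changes are needed.
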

\begin{proof}
Integrating the inequality 
\begin{equation}\label{123}
1 + \ln s = \ln (e s) < \ln(1 + 3s) \le 2\ln(1 + s) ,  \ \ \ s \ge 1, 
\end{equation}
one gets $s \ln_+ s \le 2\mathcal{B}(s)$.

If $s \ge 1$, then 
\begin{eqnarray*}
\mathcal{B}(s) = (1 + s) \ln(1 + s) - s \le 2s \ln(2s) - s = (2\ln 2 - 1)s + 2s\ln s \\
< s + 2s\ln s .
\end{eqnarray*}
If $s \in [0, 1)$, then integrating the inequality $\ln(1 + s) \le s$ one gets
$$
(1 + s) \ln(1 + s) - s \le \frac12\, s^2 \le \frac12\, s \le s .
$$
\end{proof}

Suppose $g \in L^\infty_{2\pi}$ and let $\kappa_0 := \|g\|_{L^1_{2\pi}}$. Then it follows from Lemma 
\ref{elem} and \eqref{123} that
\begin{eqnarray*}
&& \int_{-\pi}^\pi \mathcal{B}\left(\frac{g(t)}{\kappa_0}\right)\, dt \le  
\frac{\|g\|_{L^1_{2\pi}}}{\kappa_0} + 2  \frac{\|g\|_{L^1_{2\pi}}}{\kappa_0}
\ln_+ \frac{2\pi \|g\|_{L^\infty_{2\pi}}}{\kappa_0} \\
&& < 2\left(1 + \ln\left(1 + \frac{2\pi \|g\|_{L^\infty_{2\pi}}}{\kappa_0}\right)\right) 
< 4 \ln\left(2 + \frac{2\pi \|g\|_{L^\infty_{2\pi}}}{\|g\|_{L^1_{2\pi}}}\right) .
\end{eqnarray*}
Hence
\begin{equation}\label{1infty}
\|g\|_{\mathcal{B}, [-\pi, \pi]} \le 2\|g\|_{(\mathcal{B}, [-\pi, \pi])} \le  
8\|g\|_{L^1_{2\pi}} \ln\left(2 + \frac{2\pi \|g\|_{L^\infty_{2\pi}}}{\|g\|_{L^1_{2\pi}}}\right)
\end{equation}
(see \eqref{Luxemburgequiv} and \eqref{LuxNormImpl}).

According to Zygmund's theorem (\cite[Ch. VII, Theorem (2.8)]{Zyg}), there exist 
constants $A, B > 0$ such that
\begin{equation}\label{Zygineq}
\|\mathcal{C}f\|_{L^1_{2\pi}} \le A \int_{-\pi}^\pi |f(t)| \ln_+ |f(t)|\, dt + B
\end{equation}
for any $f$ with $f\ln_+|f| \in L^1_{2\pi}$. In fact, a necessary and sufficient condition that there exists
$B > 0$ for which \eqref{Zygineq} holds is that $A > 2/\pi$ (see \cite{Pich}). Taking 
$\kappa = \|f\|_{(\mathcal{B}, [-\pi, \pi])}$ in the inequality
$$
\left\|\mathcal{C}\left(\frac{f}{\kappa}\right)\right\|_{L^1_{2\pi}} \le 
2A \int_{-\pi}^\pi \mathcal{B}\left(\frac{f(t)}{\kappa}\right)\, dt + B
$$
(see \eqref{Zygineq} and Lemma \ref{elem}) one arrives at
\begin{equation}\label{ZygOrlineq}
\|\mathcal{C}f\|_{L^1_{2\pi}} \le (2A  + B) \|f\|_{(\mathcal{B}, [-\pi, \pi])} \le
(2A  + B) \|f\|_{\mathcal{B}, [-\pi, \pi]}\, .
\end{equation}
Sharp inequalities of this type involving other equivalent norms on $L_{\mathcal{B}}$ 
(see \cite{Ben1}) can be found in \cite{Ben2, ONW}. 

If $f \ge 0$, then
\begin{eqnarray*}
&& \int_{-\pi}^\pi (1 + f(t)) \ln(1 + f(t))\, dt \le \frac{\pi}{2}\, \|\mathcal{C}f\|_{L^1_{2\pi}} \\
&& + 2\pi \left(1 + \frac1{2\pi}\, \|f\|_{L^1_{2\pi}}\right) \ln\left(1 + \frac1{2\pi}\, \|f\|_{L^1_{2\pi}}\right) 
\end{eqnarray*}
(see \cite[Ch. VII, (2.25)]{Zyg}). Let
$$
\kappa_1 := \max\{\|\mathcal{C}f\|_{L^1_{2\pi}}, \|f\|_{L^1_{2\pi}}\}\, .
$$
Applying the above inequality to $f/\kappa_1$ one gets
$$
\int_{-\pi}^\pi \mathcal{B}\left(\frac{f(t)}{\kappa_1}\right)\, dt \le \frac{\pi}{2} +
(2\pi +1) \ln\left(1 + \frac1{2\pi}\right) =: A_0 .
$$
Hence
\begin{equation}\label{ZygOrlpos}
\|f\|_{\mathcal{B}, [-\pi, \pi]} \le
2 \|f\|_{(\mathcal{B}, [-\pi, \pi])} \le  2A_0 
\max\{\|\mathcal{C}f\|_{L^1_{2\pi}}, \|f\|_{L^1_{2\pi}}\} , \ \ \ f \ge 0
\end{equation}
(see \eqref{Luxemburgequiv} and \eqref{LuxNormImpl}).

\section{A Solomyak type estimate for two dimensional Schr\"{o}dinger operators
with singular potentials}\label{CLR}

A mapping $F$ from a metric space $(X_1, d_1)$ into
a metric space $(X_2, d_2)$ is called {\it bi-Lipschitz} if
there exists a constant $M > 0$ such that
\begin{equation*}
d_1(x, y)/M \le d_2(F(x), F(y)) \le M d_1(x, y) , \ \ \forall
x, y \in X_1 .
\end{equation*}
We say that a curve $\ell$ in $\mathbb{C}$ is a Lipshitz arc if it is
a bi-Lipschitz image of $[0, 1]$.
It is clear that a  Lipschitz arc is non-self-intersecting
and rectifiable.
Using  the arc length parametrisation, one can easily
show that a simple rectifiable curve $\ell$ is a Lipschitz arc if and only if
it is a chord-arc curve, i.e. if there exists a constant
$K \ge 1$ such that the length of the subarc of $\ell$
joining any two points is bounded by $K$ times the
distance between them. When dealing with function spaces on $\ell$, 
we will always assume that $\ell$ is equipped with the arc length measure. 

Let $\ell_j$, $j = 1, \dots, J$ be Lipschitz arcs. Any two arcs $\ell_j$, $\ell_l$ may intersect,
have common subarcs or even coincide. The most important case for us is when $J = 2$ and
$\ell_1$ and $\ell_2$ are two halves of the unit circle $\mathbb{T}$. The main result of this
section is a Cwikel-Lieb-Rozenblum type estimate for a two dimensional Schr\"{o}dinger operator
with a potential supported by $\Sigma := \cup_{j = 1}^J \ell_j$.

Let $V_j \in L_{\mathcal{B}}(\ell_j)$, $j = 1, \dots, J$ (see \eqref{thepair}), $V_j \ge 0$,  and
\begin{eqnarray}\label{SchrLip}
& \mathcal{E}_V[w] : = \int_{\mathbb{R}^2} |\nabla w(x)|^2 dx - 
\sum_{j = 1}^J \int_{\ell_j} V_j(x) w^2(x) ds(x) , & \\  
& \mbox{Dom}\, (\mathcal{E}_V) =
W^1_2\left(\mathbb{R}^2\right)\cap C\left(\mathbb{R}^2\right) . & \nonumber
\end{eqnarray}

\begin{theorem}\label{CLRth}
There exists a constant $C(\Sigma) > 0$ such that 
\begin{equation}\label{CLRest}
N_- (\mathcal{E}_V) \le C(\Sigma) \sum_{j = 1}^J \|V_j\|_{\mathcal{B}, \ell_j} + 1 , \ \ \ 
\forall V_j  \in L_{\mathcal{B}}(\ell_j) , \ V_j \ge 0 .
\end{equation}
\end{theorem}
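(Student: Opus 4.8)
The plan is to prove Theorem \ref{CLRth} by reducing the two-dimensional Schr\"odinger operator with a potential supported on the one-dimensional set $\Sigma$ to a one-dimensional spectral problem on each arc, using the classical fact that the trace of $W^1_2(\mathbb{R}^2)$ on a Lipschitz arc is the space $H^{1/2,2}$ of the arc (with the arc-length parametrisation), and that the trace operator is bounded. More precisely, I would first estimate $N_-(\mathcal{E}_V)$ from above by the number of negative eigenvalues of a quadratic form living on the traces. For $w \in W^1_2(\mathbb{R}^2)\cap C(\mathbb{R}^2)$, write $\gamma_j w$ for its restriction to $\ell_j$. The Dirichlet integral $\int_{\mathbb{R}^2}|\nabla w|^2\,dx$ controls, via the Dirichlet principle (minimising over all extensions with prescribed traces on $\Sigma$), a form $\mathbf{d}_\Sigma[\varphi]$ on the trace data $\varphi = (\gamma_1 w,\dots,\gamma_J w)$ which is equivalent to $\sum_j \|\gamma_j w\|_{H^{1/2,2}(\ell_j)}^2$ modulo compatibility conditions at the intersection points of the arcs; the key point is that $\mathcal{E}_V[w] \ge 0$ whenever the corresponding trace form $\mathbf{d}_\Sigma[\varphi] - \sum_j \int_{\ell_j} V_j \varphi_j^2\,ds \ge 0$, so $N_-(\mathcal{E}_V)$ is bounded by $N_-$ of the latter. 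Then, using that $\mathbf{d}_\Sigma[\varphi] \ge c \sum_j \|\varphi_j\|_{\dot H^{1/2,2}(\ell_j)}^2$, it suffices to bound $N_-$ of each one-arc form $\int_{\ell_j}(\mathcal{C}_j u' )u\,ds - M \int_{\ell_j} V_j u^2\,ds$ for a suitable constant $M$, where $\mathcal{C}_j$ is the Hilbert transform in the arc-length parameter; summing over $j$ gives the bound $C(\Sigma)\sum_j \|V_j\|_{\mathcal{B},\ell_j} + 1$ provided we have the one-dimensional estimate.

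The one-dimensional estimate is exactly the upper bound in Theorem \ref{Latour} (up to transferring from the circle to an interval/arc, which only costs an affine change of variables and equivalence of the resulting Hilbert transforms and of the Orlicz norms), so the real content is to prove that $N_-(\mathbf{q}_V) \le C_2 \|V\|_{\mathcal{B},[-\pi,\pi]} + 1$. For that I would follow Solomyak's method referenced as \cite{Sol}. The scheme is: split $V$ into a bounded part $V_0$ with $\|V_0\|_{L^\infty}$ controlled and a singular remainder, handle the bounded part by the inequality \eqref{1infty} together with a $\|V\|_{L^1}$-type bound (the cheap half), and for the singular part use a dyadic decomposition of the torus into intervals on which $\int V$ is roughly constant ($\approx 1$), so that the number of such intervals is $\approx \|V\|_{\mathcal{B}}$ up to logarithmic weights. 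On each such interval a one-step variational argument (a localised Cwikel-type or Birman--Schwinger estimate, using the embedding of $H^{1/2,2}$ on an interval into the exponential-type Orlicz space $L_{\mathcal{A}}$, which is the Moser--Trudinger/Yudovich inequality and is precisely why the complementary pair $(\mathcal{A},\mathcal{B})$ in \eqref{thepair} appears) shows the form restricted to functions supported there has at most one (or a bounded number of) negative directions; a standard gluing/orthogonality argument over the disjoint intervals then yields $N_-(\mathbf{q}_V) \lesssim (\text{number of intervals}) + 1 \lesssim \|V\|_{\mathcal{B},[-\pi,\pi]} + 1$. The Orlicz duality inequality $\left|\int fg\right| \le \|f\|_{\mathcal{B}} \|g\|_{(\mathcal{A})}$ and the estimate \eqref{LuxNormImpl} are the tools that convert ``$\int V u^2$ small'' into ``$\|\mathcal{C}u' \cdot u\|$ dominates it.''

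I expect the main obstacle to be the geometric bookkeeping near the intersection points of the arcs $\ell_j$ — i.e. making rigorous the claim that the trace form $\mathbf{d}_\Sigma$ on $\Sigma = \cup \ell_j$ is bounded below by a sum of the individual arc forms $\sum_j \|\varphi_j\|_{\dot H^{1/2,2}(\ell_j)}^2$ up to a finite-rank perturbation coming from the compatibility conditions at the finitely many branch points, and that dropping those conditions only changes $N_-$ by a bounded amount depending on $\Sigma$ (hence the constant $C(\Sigma)$). For the case of genuine interest, $J=2$ with $\ell_1,\ell_2$ the two halves of $\mathbb{T}$, this amounts to the well-understood fact that a function in $W^1_2(\mathbb{R}^2)$ whose trace on $\mathbb{T}$ lies in $H^{1/2,2}(\mathbb{T})$ extends harmonically with Dirichlet energy $\sum_n |n|\,|\hat u(n)|^2$, so the two-halves decomposition costs only boundary-term corrections at the two endpoints $\pm 1$, which are rank-bounded. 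The second, more technical, difficulty is the careful version of Solomyak's dyadic decomposition that produces the sharp Orlicz norm $\|V\|_{\mathcal{B}}$ rather than a cruder $L\log L$ quantity; this is where the weak-type behaviour of $\mathcal{C}$ (Zygmund's inequality \eqref{Zygineq}, already recorded) and the precise form of $\mathcal{B}$ in Lemma \ref{elem} get used, and getting the constants and the single additive $+1$ right requires care but no new ideas beyond \cite{Sol}.
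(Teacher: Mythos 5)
Your overall architecture inverts the paper's: you reduce $\mathcal{E}_V$ to decoupled per-arc forms via the Dirichlet principle and the trace theorem, and then lean on the one-dimensional bound $N_-(\mathbf{q}_V)\le C\|V\|_{\mathcal{B}}+1$, which you propose to prove directly by Solomyak's scheme; the paper instead runs Solomyak's scheme in two dimensions (Lemmas \ref{CLRl1} and \ref{CLRl2}: mean-zero conditions over the squares $S(I)$, a Moser--Trudinger type trace inequality into $L_{\mathcal{A}}$, and a Besicovitch covering with constant $2$), flattens each arc by Tukia's bi-Lipschitz extension of $\varphi_j$ to all of $\mathbb{R}^2$, and only afterwards deduces the circle estimate of Theorem \ref{Latour} from Theorem \ref{CLRth} by an extension operator. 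Reversing the order is legitimate in principle, but then the entire weight of the proof rests on your direct one-dimensional argument, and that is precisely where your sketch breaks down. You propose a decomposition of the torus ``into intervals on which $\int V$ is roughly constant ($\approx 1$), so that the number of such intervals is $\approx\|V\|_{\mathcal{B}}$'': these two requirements are incompatible (equal $L^1$ shares produce $\approx\|V\|_{L^1_{2\pi}}$ intervals), and the $L^1$-based version cannot work, since $H^{1/2,2}$ does not embed into $L^\infty$, so on an interval carrying unit $L^1$ mass of $V$ the local energy does not dominate $\int V u^2$; indeed no bound of the form $N_-(\mathbf{q}_V)\le \mathrm{const}+C\|V\|_{L^1_{2\pi}}$ can hold (Remark \ref{noL1upper}). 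The working engine is the one in Lemma \ref{CLRl2}: intervals chosen so that the \emph{averaged} Orlicz norm satisfies $\|V\|^{\rm (av)}_{\mathcal{B},I_k}=2n^{-1}\|V\|_{\mathcal{B},\mathbb{I}}$, a continuity-plus-covering argument to keep the count $\le n$ and the overlap $\le 2$, one mean-zero condition per interval, and the $(\mathcal{A},\mathcal{B})$ H\"older inequality. Your preliminary splitting of $V$ into a bounded and a singular part plays no role in this scheme and does not produce the $\mathcal{B}$-norm; so the central lemma of your proof is asserted rather than proved, and as described it would fail.

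Secondary points. (i) Your per-arc forms involve the homogeneous $H^{1/2}$ seminorm, which vanishes on constants, so summing per-arc bounds yields an additive constant $J$, not the $1$ in \eqref{CLRest}; the paper removes the $J$ by a separate small-potential argument (Poincar\'e inequality on a ball $B_R\supset\Sigma$ with the single condition $w_{B_R}=0$, giving $N_-\le 1$ when $\sum_j\|V_j\|_{\mathcal{B},\ell_j}$ is below a threshold), and your scheme needs an analogous step. (ii) The reduction inequality $\int_{\mathbb{R}^2}|\nabla w|^2\,dx\ge c\sum_j[\gamma_j w]^2_{H^{1/2}(\ell_j)}$ is true but requires flattening each arc by a bi-Lipschitz homeomorphism of the plane (Tukia-type extension) --- the same ingredient the paper uses --- whereas your worry about compatibility conditions at branch points is moot: for the one-sided bound you need only that each arc's trace seminorm is separately dominated by the Dirichlet integral, and you may simply sum over $j$. (iii) The transfer of the circle estimate to an arc is not ``an affine change of variables'': you need a bounded linear extension operator from $H^{1/2}$ of an interval to $H^{1/2,2}_{2\pi}$ (e.g.\ by reflection), injective on negative subspaces, together with bi-Lipschitz invariance of the Gagliardo seminorm and of the $\mathcal{B}$-norm; this is routine but must be spelled out.
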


The proof of the theorem is an adaptation of the argument in \cite{Sol} to the case of singular
potentials. In particular, we use the following
equivalent norm on $L_\Psi(\Omega)$ with $\mu(\Omega) < \infty$ which was introduced in
\cite{Sol}:
$$
\|f\|^{\rm (av)}_{\Psi} = \|f\|^{\rm (av)}_{\Psi, \Omega} = \sup\left\{\left|\int_\Omega f g d\mu\right| : \ 
\int_\Omega \Phi(g) d\mu \le \mu(\Omega)\right\} .
$$

Let $Q := (0, 1)^2$ and
$\mathbb{I} := (0, 1)$. For any subinterval $I \subseteq \mathbb{I}$, we denote the square 
$I\times(0, |I|) \subseteq Q$ by $S(I)$. We will also use the following notation:
$$
w_S := \frac{1}{|S|} \int_S w(x)\, dx ,
$$
where $S \subseteq \mathbb{R}^2$ is a set of a finite positive two dimensional 
Lebesgue measure $|S|$. 

\begin{lemma}\label{CLRl1}{\rm (Cf. \cite[Lemma 2]{Sol})}
There exists $C_3 > 0$ such that for any $I \subseteq \mathbb{I}$, any 
$w \in W^1_2(S(I))\cap C\left(\overline{S(I)}\right)$ with $w_{S(I)} = 0$ and 
any $V \in L_{\mathcal{B}}(I)$, $V \ge 0$
the following inequality holds:
\begin{equation}\label{CLRl1eq}
\int_I  V(t) w^2(t, 0)\, dt \le C_3 \|V\|^{\rm (av)}_{\mathcal{B}, I} \int_{S(I)} |\nabla w(x)|^2\, dx .
\end{equation}
\end{lemma}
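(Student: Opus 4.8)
The plan is to reduce the asserted inequality, by a rescaling, to the fixed unit square $Q = S(\mathbb{I})$, and then to obtain it from the Moser--Trudinger trace inequality together with the Orlicz--Hölder duality between the complementary pair $\mathcal{A}$, $\mathcal{B}$ of \eqref{thepair}. For the first step, let $I \subseteq \mathbb{I}$ with $h := |I|$; after a translation assume $I = (0, h)$, and put $W(s, y) := w(hs, hy)$ for $(s, y) \in Q$ and $\widetilde{V}(s) := V(hs)$ for $s \in \mathbb{I}$. Since the two-dimensional Dirichlet integral is invariant under this rescaling, $\int_{S(I)} |\nabla w|^2\, dx = \int_Q |\nabla W|^2\, dy$; the normalisation $w_{S(I)} = 0$ becomes $W_Q = 0$; and $\int_I V(t) w^2(t, 0)\, dt = h \int_{\mathbb{I}} \widetilde{V}(s) W^2(s, 0)\, ds$. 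The averaged Orlicz norm is tailored to precisely this scaling: changing variables in its definition (the averaged analogue of \eqref{Orlicz} with $\Phi = \mathcal{A}$) gives $\|V\|^{\rm (av)}_{\mathcal{B}, I} = h\, \|\widetilde{V}\|^{\rm (av)}_{\mathcal{B}, \mathbb{I}} = h\, \|\widetilde{V}\|_{\mathcal{B}, \mathbb{I}}$, the last equality because $|\mathbb{I}| = 1$ makes the averaged norm coincide with \eqref{Orlicz}. Hence the factor $h$ cancels, and it suffices to produce a constant $C_3 > 0$ with $\int_{\mathbb{I}} \widetilde{V}(s) W^2(s, 0)\, ds \le C_3 \|\widetilde{V}\|_{\mathcal{B}, \mathbb{I}} \int_Q |\nabla W|^2\, dy$ for all $\widetilde{V} \ge 0$ on $\mathbb{I}$ and all $W \in W^1_2(Q) \cap C(\overline{Q})$ with $W_Q = 0$.

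For this reduced inequality, Hölder's inequality for the complementary $N$-functions $\mathcal{A}$, $\mathcal{B}$ gives $\int_{\mathbb{I}} \widetilde{V}(s) W^2(s, 0)\, ds \le \|\widetilde{V}\|_{\mathcal{B}, \mathbb{I}}\, \|W^2(\cdot, 0)\|_{(\mathcal{A}, \mathbb{I})}$, so everything comes down to the trace bound $\|W^2(\cdot, 0)\|_{(\mathcal{A}, \mathbb{I})} \le C \int_Q |\nabla W|^2\, dy$. Writing $E := \int_Q |\nabla W|^2\, dy$ and discarding the trivial case $E = 0$ (where $W \equiv 0$), I would note that $\mathcal{A}(r) \le e^r$, so by \eqref{LuxNormImpl} it is enough to find absolute constants $\alpha > 0$ and $C_0 \ge 1$ with $\int_{\mathbb{I}} \exp\!\big(\alpha W^2(s, 0)/E\big)\, ds \le C_0$, and then take $\kappa_0 = E/\alpha$ in \eqref{LuxNormImpl}.

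This exponential bound is a form of the Moser--Trudinger trace inequality on the fixed square $Q$, applied to the zero-mean function $W$. Concretely, the Poincaré inequality on $Q$ yields $\|W\|_{W^1_2(Q)} \le c_1 \|\nabla W\|_{L^2(Q)}$; the trace theorem for the Lipschitz domain $Q$ yields $\|W|_{\partial Q}\|_{H^{1/2}(\partial Q)} \le c_2 \|W\|_{W^1_2(Q)}$; and the limiting Sobolev (Trudinger) embedding of $H^{1/2}$ of a one-dimensional manifold into the exponential Orlicz class $\exp L^2$ yields $\int_{\partial Q} \exp\!\big(\alpha W^2 / E\big)\, d\sigma \le C_0$ with absolute $\alpha, C_0$. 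Restricting the boundary integral to the subarc $\mathbb{I} \times \{0\} \subset \partial Q$, on which the integrand is nonnegative, gives the desired estimate; tracking the constants through the two reduction steps produces $C_3$ (into which the Hölder constant is also absorbed).

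I expect the one genuinely analytic ingredient --- the rest being bookkeeping --- to be this Moser--Trudinger trace inequality on $Q$ with a \emph{uniform} constant, in the form whose right-hand side is the Dirichlet energy of a \emph{zero-mean} function rather than the full $W^1_2$-norm or the energy of a function vanishing on $\partial Q$; the Poincaré step is exactly what reconciles the zero-mean normalisation with the homogeneous one. One might hope to bypass traces and reduce directly to \cite[Lemma 2]{Sol} via the identity $W^2(s, 0) = \tfrac{1}{|I|}\int_0^{|I|} W^2(s, y)\, dy - \tfrac{2}{|I|}\int_0^{|I|}\!\int_0^y W\, \partial_y W\, d\tau\, dy$, whose first term is controlled by \cite[Lemma 2]{Sol} applied to the potential $(s, y) \mapsto V(s)$ on $S(I)$; but the cross term $\int_{S(I)} V |W|\, |\partial_y W|$ does not seem amenable to control by the \emph{unweighted} gradient energy through elementary pointwise estimates, which is why the smoothing supplied by the trace embedding appears unavoidable.
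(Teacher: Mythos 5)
Your argument is correct and follows essentially the same route as the paper: the paper proves the case $I=\mathbb{I}$ via the trace theorem combined with the Yudovich--Pohozhaev--Trudinger ($\exp L^2$) embedding (or a Moser--Trudinger type trace inequality ``in one go''), the Poincar\'e inequality to pass from the zero-mean normalisation to the Dirichlet energy, and the Orlicz--H\"older inequality for the pair $\mathcal{A},\mathcal{B}$, and then handles general $I\subseteq\mathbb{I}$ by exactly the affine rescaling you carry out explicitly, using the scale invariance of the two-dimensional Dirichlet integral and the homogeneity of the averaged norm $\|\cdot\|^{\rm (av)}_{\mathcal{B}}$. Your only deviations are cosmetic (Luxemburg versus Orlicz norm in H\"older, and phrasing the trace step through the exponential integral bound plus \eqref{LuxNormImpl}), so the proposal matches the paper's proof.
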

\begin{proof}
Let us start with the case $I = \mathbb{I}$. There exists $C_4 > 0$ such that
$$
\left\|w^2(\cdot, 0)\right\|_{\mathcal{A}, \mathbb{I}} \le C_4 \|w\|^2_{W^1_2(Q)}, \ \ \ \forall w \in 
W^1_2(Q)\cap C\left(\overline{Q}\right)
$$
(see \eqref{thepair}). This can be proved by applying the trace theorem (see, e.g., 
\cite[Theorems 4.32 and 7.53]{Ad}) and then using the Yudovich--Pohozhaev--Trudinger
embedding theorem for $H^{1/2, 2}$ (see \cite{Gri}, \cite{Peet} and \cite[Lemma 1.2.4]{MS}, 
\cite[8.25]{Ad}) or, in one go, by applying a trace inequality of the 
Yudovich--Pohozhaev--Trudinger type (see \cite[Corollary 11.8/2]{Maz}; a sharp result can be found
in \cite{Cia}).

Next, we use the Poincar\'e inequality (see, e.g., \cite[1.11.1]{Maz}): there exists $C_5 > 0$
such that
\begin{eqnarray*}
\int_Q (w(x) - w_Q)^2\, dx =
\inf_{a \in \mathbb{R}} \int_Q (w(x) - a)^2\, dx \le C_5 \int_Q |\nabla w(x)|^2\, dx, \\ 
\forall w \in W^1_2(Q) .
\end{eqnarray*}
Hence
$$
\left\|w^2(\cdot, 0\right)\|_{\mathcal{A}, \mathbb{I}} \le C_3 \int_Q |\nabla w(x)|^2\, dx, \ \ \ \forall w \in 
W^1_2(Q)\cap C\left(\overline{Q}\right), \ w_Q = 0 ,
$$
where $C_3 = C_4(C_5 + 1)$. Now, the H\"older inequality (see \cite[Theorem 9.3]{KR})
implies
$$
\int_{\mathbb{I}}  V(t) w^2(t, 0)\, dt \le 
\|V\|_{\mathcal{B}, \mathbb{I}} \left\|w^2(\cdot, 0)\right\|_{\mathcal{A}, \mathbb{I}} \le 
C_3 \|V\|_{\mathcal{B}, \mathbb{I}}  \int_Q |\nabla w(x)|^2\, dx ,
$$
i.e. \eqref{CLRl1eq} holds for $I = \mathbb{I}$. For an arbitrary subinterval 
$I \subseteq \mathbb{I}$, \eqref{CLRl1eq} is proved by applying a simple affine transformation
and arguing as in the proof of Lemma 2 in \cite{Sol}.
\end{proof}

\begin{lemma}\label{CLRl2}{\rm (Cf. \cite[Theorem 1]{Sol})}
For any $V \in L_{\mathcal{B}}(\mathbb{I})$, $V \ge 0$ and any $n \in \mathbb{N}$ 
there exists a finite cover of $\mathbb{I}$ by
intervals $I_k$, $k = 1, \dots, n_0$  such that $n_0 \le n$ and
\begin{equation}\label{CLRl2eq}
\int_{\mathbb{I}}  V(t) w^2(t, 0)\, dt \le 4 C_3 n^{-1} \|V\|_{\mathcal{B}, \mathbb{I}} 
\int_{Q} |\nabla w(x)|^2\, dx 
\end{equation}
for all
$w \in W^1_2(Q)\cap C\left(\overline{Q}\right)$ with $w_{S(I_k)} = 0$, $k = 1, \dots, n_0$. 
(The constant $C_3$ here is the same as in Lemma \ref{CLRl1}.) 
\end{lemma}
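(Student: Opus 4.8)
The plan is to follow Solomyak's splitting argument (\cite[Theorem 1]{Sol}), using Lemma~\ref{CLRl1} as the local building block, with the Orlicz norm $\|V\|_{\mathcal{B},I}$ playing the role of the additive set function that gets subdivided. The key structural fact I would establish first is a form of \emph{subadditivity} of the quantity $I \mapsto \|V\|_{\mathcal{B},I}^{\rm(av)}$ (or a convenient variant of it) over a partition of $\mathbb{I}$ into consecutive subintervals: if $\mathbb{I} = \bigcup_k I_k$ is a partition into adjacent intervals, then $\sum_k \|V\|_{\mathcal{B},I_k} \lesssim \|V\|_{\mathcal{B},\mathbb{I}}$ up to a universal constant. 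This is exactly the property that makes the Besicovitch-type / bisection selection of intervals work, and it is the reason the averaged norm $\|\cdot\|^{\rm(av)}_{\Psi}$ was introduced in \cite{Sol}: the averaged norm behaves well under the affine rescaling that takes $S(I)$ back to the unit square $Q$, so that Lemma~\ref{CLRl1} applied on each $S(I_k)$ produces the clean factor $\|V\|^{\rm(av)}_{\mathcal{B},I_k}$.

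Concretely, I would proceed as follows. First, fix $n$ and run a stopping-time / bisection procedure on $\mathbb{I}$: starting from $\mathbb{I}$ itself, repeatedly bisect any interval $I$ for which $\|V\|_{\mathcal{B},I}$ (suitably normalised) exceeds the threshold $\frac1n\|V\|_{\mathcal{B},\mathbb{I}}$, stopping on an interval once its contribution drops at or below the threshold. Using the subadditivity from the previous step, one checks the process terminates after producing at most $n_0 \le n$ intervals $I_1,\dots,I_{n_0}$ covering $\mathbb{I}$, each satisfying $\|V\|_{\mathcal{B},I_k} \le \frac{c}{n}\|V\|_{\mathcal{B},\mathbb{I}}$ (the factor $4$ in \eqref{CLRl2eq} will come from being slightly generous at the bisection step: bisecting an interval that just exceeded the threshold yields two children each of which is at most, say, twice the threshold, and the averaged-norm rescaling contributes the remaining factor). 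Then, for any $w$ with $w_{S(I_k)} = 0$ for all $k$, I would sum Lemma~\ref{CLRl1} over $k$:
\begin{equation*}
\int_{\mathbb{I}} V(t) w^2(t,0)\, dt = \sum_{k=1}^{n_0} \int_{I_k} V(t) w^2(t,0)\, dt
\le C_3 \sum_{k=1}^{n_0} \|V\|^{\rm(av)}_{\mathcal{B},I_k} \int_{S(I_k)} |\nabla w|^2\, dx .
\end{equation*}
Bounding each $\|V\|^{\rm(av)}_{\mathcal{B},I_k}$ by $4 C_3 n^{-1}\|V\|_{\mathcal{B},\mathbb{I}}$ and using that the squares $S(I_k)$ have bounded overlap inside $Q$ (each point of $Q$ lies in only a controlled number of the $S(I_k)$, since the $I_k$ are obtained from a bisection tree and $S(I)$ has height $|I|$), the sum $\sum_k \int_{S(I_k)}|\nabla w|^2 \le (\text{const})\int_Q |\nabla w|^2$, which gives \eqref{CLRl2eq}; the constants can be absorbed so that the stated form holds.

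The main obstacle I anticipate is the bookkeeping around the averaged Orlicz norm: one must verify that $\|V\|^{\rm(av)}_{\mathcal{B},I}$ (which is what Lemma~\ref{CLRl1} naturally delivers after affine rescaling of $S(I)$ to $Q$) is comparable to a genuinely subadditive set function, so that the stopping-time argument controls the \emph{number} of intervals while also controlling the \emph{size} of the contribution on each. For general $N$-functions the averaged and ordinary Luxemburg norms differ, and one needs the precise scaling $\|V(|I|\,\cdot)\|^{\rm(av)}_{\mathcal{B},\mathbb{I}}$ versus $\|V\|_{\mathcal{B},I}$ to line up; this is the technical heart of \cite[Theorem 1]{Sol}, and here it is essentially unchanged since $V$ lives on a one-dimensional interval. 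The overlap count for the $S(I_k)$ is a second point requiring a little care but is a standard consequence of the dyadic structure of the selected intervals. Everything else — summing the local estimates, tracking the factor $4$ — is routine.
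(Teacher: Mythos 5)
Your overall plan (localise via Lemma \ref{CLRl1}, subdivide $\mathbb{I}$ adaptively, sum) matches the paper's, but two of your key steps do not hold as stated. First, the additivity property you display, $\sum_k\|V\|_{\mathcal{B},I_k}\lesssim\|V\|_{\mathcal{B},\mathbb{I}}$ for a partition of $\mathbb{I}$, is false for the plain Orlicz norm: for $V\equiv 1$ and $n$ equal intervals one has $\|1\|_{\mathcal{B},I_k}=|I_k|\,\mathcal{A}^{-1}(1/|I_k|)\sim n^{-1}\ln n$, so the sum grows like $\ln n$. What is true --- and it is the whole reason the averaged norm is used, not a bookkeeping point to be checked later --- is that $I\mapsto\|V\|^{\rm (av)}_{\mathcal{B},I}$ is superadditive with constant $1$: gluing admissible test functions $g$ on disjoint intervals gives $\sum_k\|V\|^{\rm (av)}_{\mathcal{B},I_k}\le\|V\|^{\rm (av)}_{\mathcal{B},\mathbb{I}}=\|V\|_{\mathcal{B},\mathbb{I}}$ (the last equality because $|\mathbb{I}|=1$). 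You leave exactly this as ``the technical heart'' without proof, so the proposal contains no verified version of the only property the counting can rest on.

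Second, and more seriously, the dyadic stopping-time selection does not give $n_0\le n$. Superadditivity bounds the number of \emph{pairwise disjoint} intervals on which the averaged norm is at least the threshold; your stopping intervals are those on which it is at most the threshold, and their number equals the number of leaves of the bisection tree, which is governed by the \emph{nested} above-threshold intervals --- a quantity superadditivity does not control. Concretely, if $V\in L_{\mathcal{B}}(\mathbb{I})$ has an integrable singularity at a non-dyadic point, say $V(x)=|x-x_0|^{-1}\ln^{-3}(1/|x-x_0|)$ near $x_0$, then every dyadic interval containing $x_0$ down to an enormous depth stays above the threshold and the bisection produces far more than $n$ leaves. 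Your remark that a bisected interval ``just exceeded'' the threshold, so its children are at most twice the threshold, is also unjustified: the parent may exceed the threshold by a factor of order $n$ (e.g.\ $\mathbb{I}$ itself at the first step). The paper instead chooses intervals adapted to $V$ on which $\|V\|^{\rm (av)}_{\mathcal{B},I_k}$ equals \emph{exactly} $2n^{-1}\|V\|_{\mathcal{B},\mathbb{I}}$ (possible by continuity of the averaged norm in the endpoints) and applies the one-dimensional Besicovitch covering lemma, whose constant is $2$, to split the cover into two families of pairwise disjoint intervals; superadditivity then bounds each family by $n/2$ intervals, so $n_0\le n$, and the same two-family splitting gives $\sum_k\int_{S(I_k)}|\nabla w|^2\,dx\le 2\int_Q|\nabla w|^2\,dx$, which is where the constant $4=2\times 2$ in \eqref{CLRl2eq} comes from. (In one dimension a greedy left-to-right selection with averaged norm equal to the threshold would also work; but some equality-based, $V$-adapted selection is needed --- threshold-exceedance bisection is not enough.)
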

\begin{proof}
Arguing as in the proof of Theorem 1 in \cite{Sol} (see also the proof of Lemma 7.6 in \cite{S3})
and taking into account that the best constant
in the Besicovitch covering lemma equals 2 in the one dimensional case (see, e.g., 
\cite[Theorem 7]{Jak} or \cite[Ch. 3, Section 6, Problem 2]{SS}), one can prove the existence
of a finite cover of $\mathbb{I}$ by
intervals $I_k$, $k = 1, \dots, n_0$  such that $n_0 \le n$ and
$\|V\|^{\rm (av)}_{\mathcal{B}, I_k}  = 2n^{-1} \|V\|_{\mathcal{B}, \mathbb{I}}$. Then Lemma \ref{CLRl1}
implies the following for all $w \in W^1_2(Q)\cap C\left(\overline{Q}\right)$ with 
$w_{S(I_k)} = 0$, $k = 1, \dots, n_0$
\begin{eqnarray*}
\int_{\mathbb{I}}  V(t) w^2(t, 0)\, dt \le  \sum_{k = 1}^{n_0} \int_{I_k}  V(t) w^2(t, 0)\, dt\\
\le \sum_{k = 1}^{n_0} C_3 \|V\|^{\rm (av)}_{\mathcal{B}, I_k} \int_{S(I_k)} |\nabla w(x)|^2\, dx \\
 = 2 C_3 n^{-1} \|V\|_{\mathcal{B}, \mathbb{I}} 
\sum_{k = 1}^{n_0}  \int_{S(I_k)} |\nabla w(x)|^2\, dx \\
\le 4 C_3 n^{-1} \|V\|_{\mathcal{B}, \mathbb{I}} 
\int_{Q} |\nabla w(x)|^2\, dx .
\end{eqnarray*}
In the last inequality, we use the fact that the  intervals $I_k$, $k = 1, \dots, n_0$ can be divided
into two groups in such a way that any two intervals from the same group are disjoint. 
\end{proof}

\begin{proof}[Proof of Theorem \ref{CLRth}]
Let $\varphi_j : [0, 1] \to \mathbb{R}^2$ be a bi-Lipschitz mapping such that
$\varphi_j([0, 1]) = \ell_j$, $j = 1, \dots, J$. The mapping $\varphi_j$ can be extended to a
bi-Lipschitz homeomorphism $\varphi_j : \mathbb{R}^2 \to \mathbb{R}^2$
(\cite{Tuk}, see also \cite{DP, JK, Mac} and \cite[Theorem 7.10]{Pom}). Using
Lemma \ref{CLRl2} one can easily prove that for any $n_j \in \mathbb{N}$ 
there exist
intervals $I_{j,k}$, $k = 1, \dots, n_{j,0}$  such that $n_{j, 0} \le n_j$ and
$$
\int_{\ell_j} V_j(x) w^2(x) ds(x) \le C(\ell_j) n_j^{-1} \|V_j\|_{\mathcal{B}, \ell_j} 
\int_{\mathbb{R}^2} |\nabla w(x)|^2\, dx 
$$
for all
$w \in W^1_2(\mathbb{R}^2)\cap C\left(\mathbb{R}^2\right)$ with 
\begin{equation}\label{zeroint}
\int_{\varphi_j(S(I_{j, k}))} w(x) \left|\mathbf{J}_{\varphi^{-1}_j}(x)\right| dx = 0 , \ \ \ 
k = 1, \dots, n_{j,0} ,
\end{equation}
where  $\mathbf{J}_{\varphi^{-1}_j}$ is the Jacobian determinant of 
$\varphi^{-1}_j$, and $C(\ell_j)$ is independent of $V_j$ and $n_j$.

Let $n_j = [C(\ell_j) J \|V_j\|_{\mathcal{B}, \ell_j}] + 1$, $j = 1, \dots, J$. Take any
linear subspace $\mathcal{L} \subset W^1_2(\mathbb{R}^2)\cap C\left(\mathbb{R}^2\right)$
such that 
$$
\dim \mathcal{L} > \sum_{j = 1}^J n_j = J + \sum_{j = 1}^J [C(\ell_j) J \|V_j\|_{\mathcal{B}, \ell_j}] .
$$
Since $n_{j, 0} \le n_j$, there exists $w \in \mathcal{L}\setminus\{0\}$ which satisfies 
\eqref{zeroint} for all $j = 1, \dots, J$. Then
\begin{eqnarray*}
\mathcal{E}_V[w]  &=& \int_{\mathbb{R}^2} |\nabla w(x)|^2 dx - 
\sum_{j = 1}^J \int_{\ell_j} V_j(x) w^2(x) ds(x) \\
&\ge& \int_{\mathbb{R}^2} |\nabla w(x)|^2 dx - 
\sum_{j = 1}^J \frac{C(\ell_j) \|V_j\|_{\mathcal{B}, \ell_j}}{[C(\ell_j) J \|V_j\|_{\mathcal{B}, \ell_j}] + 1} 
\int_{\mathbb{R}^2} |\nabla w(x)|^2\, dx \\
&\ge& \int_{\mathbb{R}^2} |\nabla w(x)|^2 dx - 
\sum_{j = 1}^J \frac{1}{J} 
\int_{\mathbb{R}^2} |\nabla w(x)|^2\, dx = 0 .
\end{eqnarray*}
Hence
\begin{equation}\label{withJ}
N_- (\mathcal{E}_V) \le J + \sum_{j = 1}^J [C(\ell_j) J \|V_j\|_{\mathcal{B}, \ell_j}] \le
J\left(\sum_{j = 1}^J C(\ell_j) \|V_j\|_{\mathcal{B}, \ell_j} + 1\right) .
\end{equation}

Take $R > 0$ such that 
\begin{equation}\label{inBall}
\Sigma \subset B_R := \{x \in \mathbb{R}^2 : \ |x| < R\} .
\end{equation}
Using the Yudovich--Pohozhaev--Trudinger
embedding theorem and the H\"older inequality for Orlicz spaces as in
the proof of Lemma \ref{CLRl1} one can prove the existence of a constant 
$C_0(\Sigma) > 0$ such that
\begin{eqnarray*}
\sum_{j = 1}^J \int_{\ell_j} V_j(x) w^2(x) ds(x) \le 
C_0(\Sigma) \sum_{j = 1}^J \|V_j\|_{\mathcal{B}, \ell_j}\, \|w\|^2_{W^1_2(B_R)} , \\ 
\forall w \in W^1_2(\mathbb{R}^2)\cap C\left(\mathbb{R}^2\right) .
\end{eqnarray*}
Next, we use the Poincar\'e inequality as in the proof of Lemma \ref{CLRl1}: there exists
a constant $C_R > 0$ such that
$$
\|w\|^2_{W^1_2(B_R)} \le C_R \int_{B_R} |\nabla w(x)|^2 dx ,  \ \ \
w \in W^1_2(B_R) , \ w_{B_R} = 0 . 
$$
If $\sum_{j = 1}^J \|V_j\|_{\mathcal{B}, \ell_j} \le 1/(C_0(\Sigma) C_R)$, then
$$
\mathcal{E}_V[w] \ge 0
$$
for all
$w \in W^1_2(\mathbb{R}^2)\cap C\left(\mathbb{R}^2\right)$ with $w_{B_R} = 0$, and
hence $N_- (\mathcal{E}_V) \le 1$. Combining this 
with \eqref{withJ} one gets the existence of a constant $C(\Sigma) > 0$ for which 
\eqref{CLRest} holds.
\end{proof}

\begin{remark}\label{CLRball}
{\rm Let $R > 0$ satisfy \eqref{inBall}. Then it follows from the above proof that
\eqref{CLRest} holds, with a constant depending on $R$, for the form
\begin{eqnarray*}
& \mathcal{E}_{V, R}[w] : = \int_{B_R} |\nabla w(x)|^2 dx - 
\sum_{j = 1}^J \int_{\ell_j} V_j(x) w^2(x) ds(x) , & \\  
& \mbox{Dom}\, (\mathcal{E}_{V, R}) =
W^1_2\left(B_R\right)\cap C\left(\overline{B_R}\right) . & 
\end{eqnarray*}}
\end{remark}

\begin{remark}\label{geomC}
{\rm It might be interesting to give an explicit estimate of the constant $C(\Sigma)$ in \eqref{CLRest} 
in terms of geometric properties of $\ell_j$, $j = 1, \dots, J$ 
(see \cite{CK, LLPB} for some related results in the case $V = \mbox{const}$).}
\end{remark}

\begin{remark}\label{moreCLR}
{\rm Using conformal mappings like $z \mapsto 1/(z - z_0)$ one can extend Theorem \ref{CLRth}
to some unbounded Lipschitz curves (see \cite{S3} where this trick has been applied in the
case $V \in L_{\mathcal{B}}(\mathbb{R}^2)$). One can also combine Theorem \ref{CLRth} with
the known results on Schr\"odinger operators with locally integrable on $\mathbb{R}^2$ potentials
(see \cite{BL, GN, LS, MV, MV1, S3, Sol, Sol1} and the references therein).  It would be interesting
to extend these results to potentials of the form $\varrho\mu$, where $\mu$ is a Radon measure
and $\varrho$ is a suitable function (see \cite{BEKS}).}
\end{remark}

\section{Proof of Theorem \ref{Latour}}\label{1.2}

Let
\begin{equation}\label{ourE}
\mathcal{E}_0(u, v) := \int_{-\pi}^{\pi} (\mathcal{C} u'(t)) v(t) dt , \ \ \
\mathcal{E}_0[u] :=\mathcal{E}_0(u, u) .
\end{equation}

\begin{proof}[Proof of the lower estimate in \eqref{est}]
Let $\mathbb{D}$ denote the unit disk. For any 
$w \in W^1_2(\mathbb{D})\cap C\left(\overline{\mathbb{D}}\right)$, define
$w^*(t) := w\left(e^{it}\right)$, $t \in \mathbb{R}$. According to the trace theorem (see, e.g., 
\cite[Ch. 1, Theorem 8.3]{LM} or \cite[Theorem 7.53]{Ad}), there exists $c_1 > 0$ such that
\begin{equation}\label{E0E}
\mathcal{E}_0[w^*] \le c_1 \mathcal{E}[w] , \ \ \ \forall w \in 
W^1_2(\mathbb{D})\cap C\left(\overline{\mathbb{D}}\right) ,
\end{equation}
where
$$
\mathcal{E}[w] := \|w\|^2_{W^1_2(\mathbb{D})}, \ \ \  \mbox{Dom}\, (\mathcal{E}) =
W^1_2(\mathbb{D})\cap C\left(\overline{\mathbb{D}}\right) .
$$

Let $X$ be the closed unit disk. Define a Radon measure $\sigma$ on $X$ by
$$
\sigma(E) := \int_{(E\cap\mathbb{T})_*} V(t)\, dt ,
$$ 
where
$$
F_* := \left\{t \in (-\pi, \pi] : \ e^{it} \in F\right\} , \ \ \ F \subseteq \mathbb{T} . 
$$
The corresponding Hermitian form is
$$
\sigma(w, h) := \int_X w h\, d\sigma = \int_{-\pi}^\pi V(t) w^*(t) h^*(t) dt\, .
$$
It follows from \cite[Theorem 4.1]{GNY} that there is a universal constant $c > 0$ 
for which
$$
N_-\left(\mathcal{E} - \frac{1}{c_1}\, \sigma\right) \ge \left[\frac{c}{c_1}\, \sigma(X)\right] =
\left[\frac{c}{c_1}\,  \|V\|_{L^1_{2\pi}}\right] ,
$$ 
where $[\cdot]$ denotes the integer part.

Let $\mathcal{L} \subset \mbox{Dom}\, (\mathcal{E})$ be a linear subspace such that
$$
c_1 \mathcal{E}(w) - \sigma[w] < 0 , \ \ \ \forall w \in \mathcal{L}\setminus\{0\} .
$$
Then the linear mapping $w \mapsto w^*$ is one-to-one on $\mathcal{L}$. Otherwise there would 
exist $w_0 \in \mathcal{L}\setminus\{0\}$ such that $w_0^* = 0$ and
$$
c_1 \mathcal{E}(w_0) - \sigma[w_0]  = c_1 \|w_0\|^2_{W^1_2(\mathbb{D})} - 
\int_{-\pi}^\pi V(t) \left(w_0^*(t)\right)^2 dt = c_1 \|w_0\|^2_{W^1_2(\mathbb{D})} > 0 ,
$$
which is a contradiction. Now, it follows from \eqref{E0E} that
$$
N_- (\mathbf{q}_V) \ge N_-\left(c_1 \mathcal{E} - \sigma\right) =
N_-\left(\mathcal{E} - \frac{1}{c_1}\ \sigma\right) \ge \left[\frac{c}{c_1}\,  \|V\|_{L^1_{2\pi}}\right] .
$$
Note that $N_- (\mathbf{q}_V) \ge 1$ if $V \not= 0$. Indeed, 
$$
\mathbf{q}_V[1] = \mathcal{E}_0[1] - \int_{-\pi}^\pi V(t) dt = -\|V\|_{L^1_{2\pi}} < 0 .
$$ 
Hence
$$
N_- (\mathbf{q}_V) \ge \frac{c}{2c_1}\,  \|V\|_{L^1_{2\pi}} 
$$
(cf. the proof of Theorem 4.17 in \cite{GNY}).
\end{proof}

\begin{remark}
{\rm (i) The reason for applying Theorem 4.1 of \cite{GNY} to the form $\mathcal{E}$ rather than
directly to $\mathcal{E}_0$ in the above proof is that the former is local while the latter is not.

(ii) One can take $X = \mathbb{R}^2$ instead of $X = \overline{\mathbb{D}}$ in the above proof.
The former is more convenient in the proof of the upper estimate for $N_- (\mathbf{q}_V)$
below.}
\end{remark}

\begin{proof}[Proof of the upper estimate in \eqref{est}]
Since $\left(\mathcal{E}_0[u] + \|u\|_{L^2_{2\pi}}^2\right)^{1/2}$ is an equivalent norm on 
$H^{1/2, 2}_{2\pi}$ (see \eqref{FirstOrd}, \eqref{ourE}), it follows from the trace theorem (see, e.g., 
\cite[Ch. 1, Theorems 8.1 and 8.3]{LM} or \cite[Theorems 4.28 and 7.53]{Ad}) that there exists a
linear operator $R : H^{1/2, 2}_{2\pi} \cap C_{2\pi} \to 
W^1_2\left(\mathbb{R}^2\right)\cap C\left(\mathbb{R}^2\right)$ and a constant $c_2 > 0$
which satisfy
$$
\|Ru\|^2_{W^1_2\left(\mathbb{R}^2\right)} \le c_2 \left(\mathcal{E}_0[u] + \|u\|^2_{L^2_{2\pi}}\right) , 
\ \ \  \forall u \in H^{1/2, 2}_{2\pi} \cap C_{2\pi} ,
$$
and $(Ru)\left(e^{it}\right) = u(t)$, $t \in \mathbb{R}$.

Consider the form
\begin{eqnarray*}
& \mathcal{E}_{1,V}[w] : = \int_{\mathbb{R}^2} |\nabla w(x)|^2 dx - 
c_2 \int_{-\pi}^{\pi} (V(t) + 1) w^2\left(e^{it}\right) dt , & \\  
& \mbox{Dom}\, (\mathcal{E}_{1,V}) =
W^1_2\left(\mathbb{R}^2\right)\cap C\left(\mathbb{R}^2\right) . &
\end{eqnarray*}

Let $\mathcal{L}_0$ be a linear subspace of $H^{1/2, 2}_{2\pi} \cap C_{2\pi}$ such that
$\dim \mathcal{L}_0 = N_- (\mathbf{q}_V)$ and
$$
\mathbf{q}_V[u] < 0, \ \ \  \forall u \in \mathcal{L}_0\setminus\{0\}
$$
(see \eqref{N-}, \eqref{form}), and let $\mathcal{L} = R\mathcal{L}_0$. Then
$$
\mathcal{E}_{1,V}[w]  < 0, \ \ \  \forall w \in \mathcal{L}\setminus\{0\} .
$$
Indeed,
\begin{eqnarray*}
\mathcal{E}_{1,V}[w] : = \int_{\mathbb{R}^2} |\nabla w(x)|^2 dx - 
c_2 \int_{-\pi}^{\pi} (V(t) + 1) w^2\left(e^{it}\right) dt \\
\le c_2 \left(\mathcal{E}_0[w^*] + \|w^*\|^2_{L^2_{2\pi}}\right) - 
c_2 \int_{-\pi}^{\pi} (V(t) + 1) w^2\left(e^{it}\right) dt \\
= c_2 \mathbf{q}_V[w^*] < 0, \ \ \  \forall w \in \mathcal{L}\setminus\{0\} ,
\end{eqnarray*}
where $w^*(t) := w\left(e^{it}\right)$, $t \in \mathbb{R}$ as in the proof of the lower estimate 
in \eqref{est}. Since $\dim \mathcal{L} = \dim \mathcal{L}_0 = N_- (\mathbf{q}_V)$, Theorem
\ref{CLRth} implies the existence of a constant $C > 0$ such that
\begin{eqnarray}\label{largeV}
N_- (\mathbf{q}_V) \le N_- (\mathcal{E}_{1,V}) \le C \|c_2(V + 1)\|_{\mathcal{B}, [-\pi, \pi]}  + 1 
\nonumber \\
\le C c_2 \|V\|_{\mathcal{B}, [-\pi, \pi]} + C c_2 \|1\|_{\mathcal{B}, [-\pi, \pi]} + 1 . 
\end{eqnarray} 
(According to \cite[(9.11)]{KR}, $\|1\|_{\mathcal{B}, [-\pi, \pi]} = 
2\pi \mathcal{A}^{-1}\left(\frac{1}{2\pi}\right)$.)

Using the Yudovich--Pohozhaev--Trudinger
embedding theorem for $H^{1/2, 2}$ and the H\"older inequality for Orlicz spaces as in
the proofs of Lemma \ref{CLRl1} and Theorem \ref{CLRth}, one can prove the existence of 
a constant $c_3 > 0$ such that
$$
 \int_{-\pi}^{\pi} V(t)  u^2(t) dt \le c_3 \|V\|_{\mathcal{B}, [-\pi, \pi]} \left(\mathcal{E}_0[u] + 
 \|u\|^2_{L^2_{2\pi}}\right) , \ \ \  \forall u \in H^{1/2, 2}_{2\pi} \cap C_{2\pi} .
$$
If $\|V\|_{\mathcal{B}} \le 1/(2c_3)$, then
$$
\mathbf{q}_V[u] \ge \mathcal{E}_2[u] := \frac12 \left(\mathcal{E}_0[u] -  \|u\|^2_{L^2_{2\pi}}\right) , 
\ \ \  \forall u \in H^{1/2, 2}_{2\pi} \cap C_{2\pi} 
$$
and
$$
N_- (\mathbf{q}_V) \le N_- (\mathcal{E}_2) = 1 ,
$$
where the last equality holds because the spectrum of $u \mapsto \mathcal{C}u'$ consists of 
a simple eigenvalue $0$ and of double eigenvalues $m \in \mathbb{N}$ (see \eqref{FirstOrd}). 
Combining this estimate
with \eqref{largeV} one gets the existence of a constant $C_2 > 0$ for which the upper estimate 
in \eqref{est} holds.
\end{proof}

\begin{proof}[Sketch of an alternative proof of the upper estimate in \eqref{est}]
Let $\mathcal{P} u$ be the harmonic in $\mathbb{D}$ function such that
$(\mathcal{P} u)\left(e^{it}\right) = u(t)$, $t \in \mathbb{R}$, i.e. let $\mathcal{P} u$
be the Poisson integral of $u$. If $u \in C^\infty_{2\pi}$, then the Cauchy--Riemann equations
allow one to express the normal derivative of $\mathcal{P} u$ at $z = e^{it}$ in terms of the
tangential derivative of the harmonic conjugate of $\mathcal{P} u$. Applying Green's identity
one gets
$$
\int_{\mathbb{D}} |\nabla \mathcal{P} u(x)|^2 dx = \mathcal{E}_0[u] ,
$$ 
which can be extended to all $u \in H^{1/2, 2}_{2\pi}$ by continuity. 

Let us extend $\mathcal{P} u$ to $\mathbb{R}^2 = \mathbb{C}$ by
$$
\mathcal{P} u(z) := \mathcal{P} u\left(\frac{1}{\bar{z}}\right) , \ \ \ z \in \mathbb{C} , \ |z| > 1 .
$$
An easy calculation shows that 
$$
\int_{\mathbb{R}^2\setminus\mathbb{D}} |\nabla \mathcal{P} u(x)|^2 dx = 
\int_{\mathbb{D}} |\nabla \mathcal{P} u(x)|^2 dx .
$$ 
Since 
$$
\lim_{\mathbb{R}^2\setminus\overline{\mathbb{D}}\ni y \to x} \mathcal{P} u(y) =
\lim_{\mathbb{D}\ni y \to x} \mathcal{P} u(y) , \ \ \ |x| = 1,
$$
the extended function $\mathcal{P} u$ belongs to 
$W^1_2\left(B_2\right)\cap C\left(\mathbb{R}^2\right)$
(see \eqref{inBall}) and
$$
\int_{B_2} |\nabla \mathcal{P} u(x)|^2 dx \le
\int_{\mathbb{R}^2} |\nabla \mathcal{P} u(x)|^2 dx = 2 \mathcal{E}_0[u] .
$$ 
Let
\begin{eqnarray*}
& \mathcal{E}_{3,V}[w] : = \int_{B_2} |\nabla w(x)|^2 dx - 
2 \int_{-\pi}^{\pi} V(t) w^2\left(e^{it}\right) dt , & \\  
& \mbox{Dom}\, (\mathcal{E}_V) =
W^1_2\left(B_2\right)\cap C\left(\overline{B_2}\right) . &
\end{eqnarray*}
Using Remark \ref{CLRball} and arguing as above, one gets
$$
N_- (\mathbf{q}_V) \le N_- (\mathcal{E}_{3,V}) \le 2C \|V\|_{\mathcal{B}, [-\pi, \pi]}  + 1 ,
$$
and there is no need for an additional treatment of the case of a small $\|V\|_{\mathcal{B}, [-\pi, \pi]}$
like in the final part of the above proof. 
\end{proof}

\begin{remark}\label{noL1upper}
{\rm Repeating the argument from Example 2.7 in \cite{GN} one can show that no estimate of the type 
$$
N_- (\mathbf{q}_V) \le  \mbox{const} + \int_{-\pi}^{\pi} V(t)  W(t)\, dt
$$
can hold, provided the weight function $W$ is bounded in a neighborhood of at least 
one point.}
\end{remark}

\begin{remark}\label{asympt}
{\rm If $V \in L_{\mathcal{B}}([-\pi, \pi])$, $V \ge 0$, then
\begin{equation}\label{asymform}
\lim_{\alpha \to +\infty} \frac{N_- (\mathbf{q}_{\alpha V})}{\alpha} = \frac{1}{\pi}
\int_{-\pi}^{\pi} V(t) \, dt .
\end{equation}
Indeed, suppose first that $V \in C^\infty_{2\pi}$, $V > 0$ and consider the form
$$
\mathcal{E}_{\alpha,V}[u] :=  \mathcal{E}_0\left[V^{-1/2} u\right] 
- \alpha  \|u\|^2_{L^2_{2\pi}}, \ \ \ 
u \in H^{1/2, 2}_{2\pi} \cap C_{2\pi}\, .
$$
Since the operator of multiplication by $V^{-1/2}$ is invertible on $H^{1/2, 2}_{2\pi} \cap C_{2\pi}$,
we get
$$
N_- (\mathbf{q}_{\alpha V}) = N_-(\mathcal{E}_{\alpha,V}) .
$$
The right-hand side equals the value at $\alpha$ of the eigenvalue counting function of
the operator $u \mapsto V^{-1/2} \mathcal{C}\left(V^{-1/2} u\right)'$, which can be viewed as 
a first order pseudodifferential operator on the unit circle. Formula \eqref{asymform} follows
from the well known results on spectral asymptotics for elliptic pseudodifferential operators
on compact manifolds (see, e.g., \cite[\S 15]{Shu}; sharp asymptotic results for operators on the
circle can be found in \cite{Roz}). One can extend \eqref{asymform} to arbitrary
$V \in L_{\mathcal{B}}([-\pi, \pi])$, $V \ge 0$ by a standard perturbation-theoretic argument
(see, e.g., the proof of Theorem 2.2 in \cite{Sol1}). }
\end{remark}

\section{Proof of Theorem \ref{Main}}\label{1.1}

We prove a generalisation of Theorem \ref{Main} to Bernoulli free-boundary problems
(\cite{ST3})
which are obtained from \eqref{Pr1}-\eqref{Pr5} by substituting the boundary condition \eqref{Pr5} 
with the following one:
\begin{equation}\label{Pr5'}
|\nabla\psi(X,Y)|^2  = \lambda(Y)  \text{ almost everywhere on } \mathcal{S} ,
\end{equation}
where $\lambda :\mathbb{R} \to \mathbb{R}$ is a given continuous function. We assume that
$\lambda$ is non-constant and real analytic on the open set of full measure where it is non-zero,
and that the following holds with some $\varrho > 0$:
\begin{eqnarray}
&& \mbox{if } \lambda(y_0) = 0, \ \mbox{ then } \ \ |\lambda(y)| \le \mbox{const}\, |y - y_0|^\varrho , \ \ \
\forall y \in \mathbb{R} , \label{varrho} \\
&& \ln |\lambda|  \mbox{ is concave, and }  \lambda' \le 0 \mbox{ where } \lambda \not= 0 .
\label{logconc} 
\end{eqnarray}
If $(\mathcal{S}, \psi)$ is a solution of \eqref{Pr1}-\eqref{Pr4}, \eqref{Pr5'}, then, clearly,   
$\lambda \ge 0$ almost everywhere on $\mathcal{S}$. If
$\lambda > 0$ everywhere on $\mathcal{S}$, then
$\mathcal{S}$ and $\psi$ are real analytic (\cite{ST3}), and the
solution is called regular or non-singular.  Let
$$
p(\varrho) := \frac{\varrho + 2}{\varrho}\, .
$$
Non-singular solutions
of \eqref{Pr1}-\eqref{Pr4}, \eqref{Pr5'} are in one-to-one correspondence with the critical points 
$v \in W^{1, p(\varrho)}_{2\pi}$ of the functional 
\begin{equation}\label{Bernfunctional}
\mathcal{J} (v) = \int_{-\pi}^{\pi} \Big(\Lambda(v(t)) (1 + \mathcal{C} v'(t)) - v(t)\Big) dt, \ \ \ 
v \in W^{1, 2}_{2\pi} ,
\end{equation}
where $\Lambda$ is a primitive of $\lambda$ (see \cite{ST3}). 

Let $v$ be a critical point of $\mathcal{J}$.
The Morse index $\mathcal{M}(v)$ of $v$ and of the corresponding solution to 
\eqref{Pr1}-\eqref{Pr4}, \eqref{Pr5'} is the number 
$N_-(\mathcal{Q}_v)$, where $\mathcal{Q}_v$ is the quadratic form of the second 
Fr\'echet derivative $\mathcal{J}''(v)$ (the Hessian): 
\begin{equation}\label{BernHessian}
\mathcal{Q}_v[u] :=  \int_{-\pi}^{\pi} \Big(2\lambda(v(t)) u(t) \mathcal{C} u'(t) +
\lambda'(v(t)) (1 + \mathcal{C} v'(t))u^2(t)\Big) dt .
\end{equation}

Every critical point $v \in W^{1, p(\varrho)}_{2\pi}$ of \eqref{Bernfunctional} is 
a real analytic function and
$$
\min_{t \in \mathbb{R}}\, \lambda(v(t))  >  0 
$$
(see \cite{ST3}). Let
\begin{equation}\label{Bernnudef}
\nu(v) := \max_{t \in \mathbb{R}}\, \frac{|\lambda'(v(t))|}{\lambda(v(t))}\, , \ \ \ 
\nu_0(v) := \max_{t \in \mathbb{R}}\, \frac{1}{\lambda(v(t))} =
\frac{1}{\min_{t \in \mathbb{R}}\, \lambda(v(t))}\, .
\end{equation}

Suppose there exist constants $m_1, m_2 > 0$ such that
\begin{equation}\label{strvarrho}
\frac{m_1}{\lambda(y)^{1/\varrho}} \le \frac{|\lambda'(y)|}{\lambda(y)} \le
\frac{m_2}{\lambda(y)^{1/\varrho}} \ \ \mbox{ for all } \ y \in \mathbb{R} \ 
\mbox{ with } \ \lambda(y) \not= 0.
\end{equation}
It is easy to see that \eqref{strvarrho} implies \eqref{varrho}. 

In the case of Stokes waves, $\lambda(y) = 1 - 2\mu y$ and \eqref{logconc}, \eqref{strvarrho}
hold with $\varrho = 1$ and $m_1 = m_2 = 2\mu$ (note also that $p(1) = 3$). Hence, Theorem
\ref{Main} is a special case of the following result.
\begin{theorem}\label{BernMain}
Suppose \eqref{logconc} and \eqref{strvarrho} hold.
Then there exist constants $M_1, M_2 > 0$ which depend only on $\varrho$ and are such that
\begin{equation}\label{BernMainest}
M_1 \frac{m_1}{m_2} \ln^{\frac{\varrho}{\varrho + 2}}(1 + \nu(v)) \le 
\mathcal{M}(v) \le 1 + M_2\, \nu(v) \ln(2 + \nu_0(v))
\end{equation}
holds for every critical point $v \in W^{1, p(\varrho)}_{2\pi}$ of \eqref{Bernfunctional}. 
\end{theorem}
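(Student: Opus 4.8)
The strategy is to reduce the Hessian $\mathcal{Q}_v$ in \eqref{BernHessian} to the model form $\mathbf{q}_V$ of \eqref{form} via Plotnikov's transformation, and then apply Theorem \ref{Latour} together with the elementary estimates on the $L_{\mathcal{B}}$-norm recorded in Section \ref{App}. For the upper bound, I would first carry out the substitution that turns $\mathcal{Q}_v$ into $c\,\mathbf{q}_V$ for a suitable potential $V\ge 0$; inspecting \eqref{BernHessian} and using $\lambda'\le 0$ where $\lambda\ne 0$ (from \eqref{logconc}), one expects $V$ to be comparable to $|\lambda'(v)|/\lambda(v)$ times a bounded factor coming from $1+\mathcal{C}v'$, after conjugating by multiplication by $\lambda(v(t))^{1/2}$ (which is invertible on $H^{1/2,2}_{2\pi}\cap C_{2\pi}$ since $\lambda(v)$ is real analytic and bounded away from $0$). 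The key quantitative point is then that $\|V\|_{L^\infty_{2\pi}}\le \nu(v)\cdot\mathrm{const}$ and $\|V\|_{L^1_{2\pi}}\le \mathrm{const}\cdot\nu(v)$, so that \eqref{1infty} gives $\|V\|_{\mathcal{B},[-\pi,\pi]}\le \mathrm{const}\cdot\nu(v)\ln(2+\nu_0(v))$; combined with $N_-(\mathbf{q}_V)\le C_2\|V\|_{\mathcal{B}}+1$ this yields the right-hand inequality in \eqref{BernMainest}.

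For the lower bound I would use the left-hand inequality in \eqref{est}, namely $N_-(\mathbf{q}_V)\ge C_1\|V\|_{L^1_{2\pi}}$, so it suffices to bound $\|V\|_{L^1_{2\pi}}$ from below by $\mathrm{const}\cdot (m_1/m_2)\ln^{\varrho/(\varrho+2)}(1+\nu(v))$. Here the hypotheses \eqref{varrho}, \eqref{logconc}, \eqref{strvarrho} enter decisively: the point is that a critical point $v\in W^{1,p(\varrho)}_{2\pi}$ with large $\nu(v)$ must have $\lambda(v(t))$ very small somewhere, hence $v(t)$ close to a zero $y_0$ of $\lambda$, and the regularity $v\in W^{1,p(\varrho)}_{2\pi}$ together with the vanishing rate \eqref{varrho} controls how fast $\lambda(v(\cdot))$ can dip — giving a lower bound on $\int |\lambda'(v)|/\lambda(v)$ of the stated logarithmic-power form. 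Concretely, near the crest one expects $v(t)-y_0\sim -c|t-t_0|^{?}$ with the exponent tied to $\varrho$ through $p(\varrho)=(\varrho+2)/\varrho$, and integrating $|\lambda'(v)|/\lambda(v)\sim \lambda(v)^{-1/\varrho}\sim |t-t_0|^{-1}$-type behaviour over the region where $\lambda(v)\le$ (a threshold) produces the $\ln^{\varrho/(\varrho+2)}$ factor; the ratio $m_1/m_2$ appears because the upper bound in \eqref{strvarrho} is used to convert the measure of the small-$\lambda$ set into a lower bound for the integral while the lower bound in \eqref{strvarrho} controls $\nu(v)$ itself.

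The main obstacle is the lower estimate: one must extract the sharp exponent $\varrho/(\varrho+2)$, which requires a careful a priori analysis of how a critical point $v$ can approach a singular configuration, using the Euler--Lagrange equation of \eqref{Bernfunctional} and the embedding $W^{1,p(\varrho)}_{2\pi}\hookrightarrow C^{0,1/p(\varrho)'}_{2\pi}$ (Hölder continuity with exponent $1-1/p(\varrho)=2/(\varrho+2)$) to pin down the local behaviour of $v$ near a near-stagnation point. The upper estimate, by contrast, is essentially bookkeeping: the reduction to $\mathbf{q}_V$, the norm comparison \eqref{1infty}, and a direct application of Theorem \ref{Latour}. I would present the upper bound first and then devote the bulk of the argument to the lower bound, isolating as a preliminary lemma the quantitative claim ``$\nu(v)$ large $\Rightarrow$ $\|V\|_{L^1_{2\pi}}$ at least logarithmic-power large'', proved by the local analysis sketched above.
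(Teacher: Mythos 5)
Your overall architecture---reduce $\mathcal{Q}_v$ of \eqref{BernHessian} to the model form $\mathbf{q}_V$ via Plotnikov's transformation, then use both halves of \eqref{est} together with a lower bound on $\|V\|_{L^1_{2\pi}}$ and an upper bound on $\|V\|_{\mathcal{B},[-\pi,\pi]}$---is exactly the paper's. But your route to the upper estimate has a genuine gap. The reduction is not a simple conjugation by multiplication by $\lambda(v)^{1/2}$: that substitution produces commutator terms and does not yield a form of type \eqref{form}. Plotnikov's potential is \eqref{PlotPot}, and besides the term $-\frac{\lambda'(v)}{\lambda(v)}(1+\mathcal{C}v')$ it contains the nonlocal term $\mathcal{C}\bigl(\frac{\lambda'(v)}{\lambda(v)}\,v'\bigr)$. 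Since the periodic Hilbert transform does not map $L^\infty$ into $L^\infty$, and $\|v'\|_{L^\infty_{2\pi}}$ is in any case not controlled by $\nu(v)$ or $\nu_0(v)$, your claimed bound $\|V\|_{L^\infty_{2\pi}}\le \mathrm{const}\,\nu(v)$ is not available, so \eqref{1infty} cannot be applied to $V$ directly. The paper circumvents precisely this point: it applies \eqref{1infty} only to $g=1+\mathcal{C}v'$, for which the a priori bound $\|1+\mathcal{C}v'\|_{L^\infty_{2\pi}}\le\sqrt{\nu_0(v)}$ is known from \cite{ST3}; it bounds $\|\mathcal{C}V\|_{L^1_{2\pi}}$ through Zygmund's inequality in the form \eqref{ZygOrlineq}; and it converts $\max\{\|\mathcal{C}V\|_{L^1_{2\pi}},\|V\|_{L^1_{2\pi}}\}$ into $\|V\|_{\mathcal{B},[-\pi,\pi]}$ via \eqref{ZygOrlpos}, which crucially uses $V\ge 0$ (guaranteed by Plotnikov's transformation). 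Without some replacement for this mechanism, the bound of Lemma \ref{ST3LlogL}, and hence the right-hand side of \eqref{BernMainest}, is not established by your sketch.

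For the lower bound you correctly identify that everything reduces, via the left half of \eqref{est}, to $\|V\|_{L^1_{2\pi}}\ge \mathrm{const}\,\frac{m_1}{m_2}\ln^{\varrho/(\varrho+2)}(1+\nu(v))$, but your proposed derivation is a heuristic rather than a proof: the local behaviour $v(t)-y_0\sim -c|t-t_0|^{?}$ near a near-stagnation point, the sharp exponent $\varrho/(\varrho+2)$ and the factor $m_1/m_2$ are asserted, not derived, and no quantitative a priori analysis of critical points of \eqref{Bernfunctional} is actually carried out. The paper does not reprove this estimate from scratch either: after noting that the Hilbert-transform term in \eqref{PlotPot} has zero mean, so that $\|V\|_{L^1_{2\pi}}=\int_{-\pi}^\pi \frac{|\lambda'(v(t))|}{2\lambda(v(t))}(1+\mathcal{C}v'(t))\,dt$, it invokes Lemma 4.25 of \cite{ST3} (this is Lemma \ref{ST3est}). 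So either you quote that result, or you must supply the missing hard analysis; as written, the left-hand inequality in \eqref{BernMainest} is not proved.
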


Using Plotnikov's transformation, one can show that $\mathcal{M}(v) =
N_- (\mathbf{q}_V)$ (see \eqref{form}), where
\begin{equation}\label{PlotPot}
2 V(t) = \mathcal{C}\left(\frac{\lambda'(v(t))}{\lambda(v(t))}\, v'(t)\right) -
\frac{\lambda'(v(t))}{\lambda(v(t))} (1 + \mathcal{C} v'(t)) > 0 , \ \ \ t \in \mathbb{R} 
\end{equation}
(see \cite{ST3}). Hence, Theorems \ref{BernMain} and \ref{Main} follow from Theorem
\ref{Latour} if one has suitable estimates for $\|V\|_{L^1_{2\pi}}$ and 
$\|V\|_{\mathcal{B}, [-\pi, \pi]}$. These are provided by the following two lemmas.

\begin{lemma}\label{ST3est}
There exists a constant $M_3 > 0$ which depends only on $\varrho$ and is such that
$$
\|V\|_{L^1_{2\pi}} \ge M_3 \frac{m_1}{m_2} \ln^{\frac{\varrho}{\varrho + 2}}(1 + \nu(v)) . 
$$
\end{lemma}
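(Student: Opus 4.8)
The plan is to pass to $\mathbf q_V$ via Plotnikov's transformation, apply the lower estimate of Theorem~\ref{Latour}, and then bound $\|V\|_{L^1_{2\pi}}$ from below using the Babenko-type equation satisfied by $v$. By \eqref{PlotPot}, $\mathcal M(v)=N_-(\mathbf q_V)$ with $V\ge 0$, so Theorem~\ref{Latour} gives $\mathcal M(v)\ge C_1\|V\|_{L^1_{2\pi}}$, and the left inequality in \eqref{BernMainest} follows once Lemma~\ref{ST3est} is proved. Writing $g:=\lambda'(v)/\lambda(v)\le 0$ (by \eqref{logconc} and $\lambda(v)>0$) and noting that $g v'=(\ln\lambda(v))'$, so $\mathcal C(g v')$ has zero mean by \eqref{FirstOrd}, integration of \eqref{PlotPot} over $[-\pi,\pi]$ yields
$$
2\|V\|_{L^1_{2\pi}}=\int_{-\pi}^{\pi}\frac{|\lambda'(v(t))|}{\lambda(v(t))}\bigl(1+\mathcal C v'(t)\bigr)\,dt ,
$$
where both factors of the integrand are nonnegative ($V>0$, and $1+\mathcal C v'\ge 0$ for genuine non-singular solutions, see \cite{ST3}), and, by \eqref{strvarrho}, the first factor is comparable to $\lambda(v)^{-1/\varrho}$ with constants $m_1,m_2$. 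Hence it suffices to bound $\int_{-\pi}^\pi\lambda(v)^{-1/\varrho}(1+\mathcal C v')\,dt$ from below, and we may assume $\nu(v)$ (equivalently $\nu_0(v)$) large, as otherwise the asserted bound is trivial.

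Next I would record two consequences of the Euler--Lagrange equation for \eqref{Bernfunctional}, which (varying \eqref{Bernfunctional} and using the skew-adjointness of $\mathcal C$ together with $(\mathcal C u)'=\mathcal C u'$) reads $\lambda(v)(1+\mathcal C v')+\mathcal C(\lambda(v)v')=1$. Since the periodic Hilbert transform of an $L^1$ function has zero mean, this gives $\int_{-\pi}^\pi\lambda(v)(1+\mathcal C v')\,dt=2\pi=\int_{-\pi}^\pi(1+\mathcal C v')\,dt$; thus $d\sigma:=(1+\mathcal C v')\,dt$ is a nonnegative measure of total mass $2\pi$ for which the $\sigma$-average of $\lambda(v)$ equals $1$, so $\max_t\lambda(v(t))\ge 1$, and, by \eqref{strvarrho}, $m_1\nu_0(v)^{1/\varrho}\le\nu(v)\le m_2\nu_0(v)^{1/\varrho}$.

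The core is a lower bound for $\int_{-\pi}^\pi\lambda(v)^{-1/\varrho}\,d\sigma$. Let $t_0$ be a point where $\lambda(v)$ attains its minimum $1/\nu_0(v)$, i.e. a near-stagnation point, and set $I_a:=\{t:\lambda(v(t))<a\}$; for $a$ just above $1/\nu_0(v)$ this is a small neighbourhood of $t_0$, and $I_a=[-\pi,\pi]$ once $a>\max_t\lambda(v(t))$. The layer-cake formula gives
$$
\int_{-\pi}^{\pi}\lambda(v)^{-1/\varrho}\,d\sigma=\frac1\varrho\int_{0}^{\infty}\sigma(I_a)\,a^{-1/\varrho-1}\,da ,
$$
so everything reduces to a lower estimate for the $\sigma$-mass $\sigma(I_a)=\int_{I_a}(1+\mathcal C v')\,dt$ of the sublevel sets near $t_0$ --- precisely the quantity that the a priori analysis of non-singular Bernoulli waves in \cite{ST3} controls. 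Using the H\"older-type regularity of $v$ near $t_0$ (a bound $v(t_0)-v(t)\le\mathrm{const}\,|t-t_0|^{2\varrho/(\varrho+2)}$, matching the exponent at a would-be corner), together with \eqref{strvarrho}, \eqref{logconc}, and the positivity $1+\mathcal C v'\ge 0$, one obtains $\sigma(I_a)\gtrsim a^{1/\varrho}$ up to a power of $\ln(1/a)$, uniformly for $a\in[1/\nu_0(v),c_0]$ with a fixed $c_0>0$. Feeding this into the displayed integral, the resulting $a^{-1}$-type integrand, integrated over $[1/\nu_0(v),c_0]$, produces a factor $\asymp\ln\nu_0(v)$ (the logarithmic correction in $\sigma(I_a)$ being responsible for the exponent $\varrho/(\varrho+2)=1/p(\varrho)$), and tracking the $m_1,m_2$ dependence through \eqref{strvarrho} yields $\|V\|_{L^1_{2\pi}}\gtrsim\frac{m_1}{m_2}\,\ln^{\varrho/(\varrho+2)}(1+\nu(v))$, which is Lemma~\ref{ST3est}.

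The hard part is the near-crest estimate for $\sigma(I_a)$: one must quantify how much of the induced measure $(1+\mathcal C v')\,dt$ --- morally, the horizontal extent of the portion of the wave profile lying within a prescribed height of the near-stagnation point --- is carried by a neighbourhood of that point, uniformly over all non-singular waves, and pin down the correct exponent in $a$ (hence the exponent $\varrho/(\varrho+2)$ in \eqref{BernMainest}) together with the honest $m_1/m_2$ prefactor. This rests on the fine a priori estimates for near-singular Bernoulli waves in \cite{ST3}; the structural positivity $1+\mathcal C v'\ge 0$, a property of genuine solutions rather than of arbitrary $v$, is used throughout.
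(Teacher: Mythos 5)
Your preliminary reductions are fine and agree with what the paper itself does elsewhere: by \eqref{PlotPot} and \eqref{CFour}, $2\|V\|_{L^1_{2\pi}}=\int_{-\pi}^{\pi}\frac{|\lambda'(v(t))|}{\lambda(v(t))}\bigl(1+\mathcal{C}v'(t)\bigr)dt$ (this is exactly the computation in the proof of Lemma~\ref{ST3LlogL}), \eqref{strvarrho} makes the first factor comparable to $\lambda(v)^{-1/\varrho}$, and $m_1\nu_0(v)^{1/\varrho}\le\nu(v)\le m_2\nu_0(v)^{1/\varrho}$. But the heart of the lemma is missing. The paper's proof is a single precise citation: the bound is an easy corollary of Lemma~4.25 in \cite{ST3}, which is precisely the uniform quantitative near-crest estimate. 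You defer the same content to \cite{ST3}, but only in the form of the unproved claim $\sigma(I_a)\gtrsim a^{1/\varrho}$ ``up to a power of $\ln(1/a)$'', and the mechanism you sketch does not deliver it. The positivity $1+\mathcal{C}v'\ge 0$ gives no lower bound at all on $\sigma(I_a)=\int_{I_a}(1+\mathcal{C}v')dt$: even granting that $I_a$ contains an interval of prescribed length around the near-stagnation point (which itself requires a H\"older bound with a constant uniform over all non-singular waves — also only asserted), you must bound the density $1+\mathcal{C}v'$ from below on that interval, and no such argument appears.

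Moreover, the exponent $\frac{\varrho}{\varrho+2}$ — the whole content of the lemma — comes in your scheme exclusively from the unspecified logarithmic correction, which you never determine. As written, with $\sigma(I_a)\gtrsim a^{1/\varrho}$ and no loss, your layer-cake computation would yield $\|V\|_{L^1_{2\pi}}\gtrsim\ln(1+\nu(v))$, i.e.\ exponent $1$, which is stronger than anything claimed in \eqref{BernMainest} or known; this should be a warning that the uniform sublevel-set mass estimate cannot simply be ``read off'' the a priori analysis of \cite{ST3}. The argument that actually produces $\frac{\varrho}{\varrho+2}$ (Plotnikov's and the one behind \cite[Lemma 4.25]{ST3}) is a genuinely harder analysis exploiting, in particular, the concavity assumption \eqref{logconc}, which your sketch never uses quantitatively; the $m_1/m_2$ prefactor is likewise only promised, not tracked. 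So the proposal is a plausible outline whose decisive step is asserted rather than proved; to make it a proof you should either quote \cite[Lemma 4.25]{ST3} precisely, as the paper does, or actually carry out the uniform near-crest estimate with the correct logarithmic power.
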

\begin{proof}
This is an easy corollary of Lemma 4.25 in \cite{ST3}.
\end{proof}

\begin{lemma}\label{ST3LlogL}
There exists a constants $M_4 > 0$ which depends only on $\varrho$ and is such that
$$
\|V\|_{\mathcal{B}, [-\pi, \pi]} \le M_4\, \nu(v) \ln(2 + \nu_0(v)) . 
$$
\end{lemma}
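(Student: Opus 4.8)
The goal is to bound $\|V\|_{\mathcal{B},[-\pi,\pi]}$ by $M_4\,\nu(v)\ln(2+\nu_0(v))$, where $2V$ is given by the Plotnikov potential formula \eqref{PlotPot}. The strategy is to split $2V = \mathcal{C}g - h$, where
$$
g(t) := \frac{\lambda'(v(t))}{\lambda(v(t))}\, v'(t), \qquad
h(t) := \frac{\lambda'(v(t))}{\lambda(v(t))}\bigl(1 + \mathcal{C} v'(t)\bigr),
$$
and to estimate the $\mathcal{B}$-norm of each piece separately using the tools assembled in Section \ref{App}. Since $V \ge 0$, one can also invoke \eqref{ZygOrlpos} directly: $\|V\|_{\mathcal{B},[-\pi,\pi]} \le 2A_0 \max\{\|\mathcal{C}V\|_{L^1_{2\pi}}, \|V\|_{L^1_{2\pi}}\}$, which reduces everything to $L^1$ bounds on $V$ and on $\mathcal{C}V$. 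First I would bound $\|V\|_{L^1_{2\pi}}$: from \eqref{PlotPot}, $2\|V\|_{L^1_{2\pi}} \le \|\mathcal{C}g\|_{L^1_{2\pi}} + \|h\|_{L^1_{2\pi}}$, and $\|h\|_{L^1_{2\pi}} \le \nu(v)\bigl(2\pi + \|\mathcal{C}v'\|_{L^1_{2\pi}}\bigr)$ using the pointwise bound $|\lambda'(v(t))/\lambda(v(t))| \le \nu(v)$ from \eqref{Bernnudef}. The term $\|\mathcal{C}g\|_{L^1_{2\pi}}$ is handled by Zygmund's inequality \eqref{Zygineq}: $\|\mathcal{C}g\|_{L^1_{2\pi}} \le A\int_{-\pi}^\pi |g|\ln_+|g|\,dt + B$, so the crux becomes controlling $\int |g|\ln_+|g|\,dt$ in terms of $\nu(v)\ln(2+\nu_0(v))$.

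The key analytic input is that $|g(t)| \le \nu(v)\,|v'(t)|$ pointwise, together with an estimate on the regularity of $v$ itself. Here I expect to need the a priori bounds on critical points from \cite{ST3} — specifically, that $v'$ (equivalently $\mathcal{C}v'$, via \eqref{FirstOrd}) is controlled in a suitable norm by a power of $\nu_0(v)$; this is exactly the kind of estimate that Lemma 4.25 of \cite{ST3} (already cited in the proof of Lemma \ref{ST3est}) should provide, and condition \eqref{strvarrho} is what ties the ratio $|\lambda'|/\lambda$ to $\lambda^{-1/\varrho}$ and hence to $\nu_0(v)$. Once one has, say, $\|v'\|_{L^\infty_{2\pi}} \le \mbox{const}\cdot\nu_0(v)^\alpha$ for an appropriate $\alpha$ depending only on $\varrho$, the logarithm in $\ln_+|g| \le \ln_+(\nu(v)\|v'\|_{L^\infty_{2\pi}})$ contributes only a factor $\ln(2+\nu_0(v))$ (up to constants depending on $\varrho$), while the $L^1$-size of $g$ contributes the factor $\nu(v)$ — giving the desired product form. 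The concavity hypothesis \eqref{logconc} is what guarantees the monotonicity and log-concavity that make these a priori estimates available in the first place.

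The main obstacle is assembling the correct a priori estimate on $v$ from \cite{ST3} and tracking how its dependence on $\nu_0(v)$ enters logarithmically rather than polynomially in the final bound. One must be careful that the $\ln_+|g|$ term genuinely produces $\ln(2+\nu_0(v))$ and not a power of $\nu_0(v)$: this works precisely because $\ln_+$ of a polynomial-in-$\nu_0$ quantity is $O(\ln(2+\nu_0(v)))$, so the polynomial blow-up of $\|v'\|_{L^\infty}$ is absorbed harmlessly under the logarithm. Having obtained $\|V\|_{L^1_{2\pi}} \le \mbox{const}\cdot\nu(v)\ln(2+\nu_0(v))$ and, by the same argument applied to $\mathcal{C}V = \tfrac12(\mathcal{C}^2 g - \mathcal{C}h) = \tfrac12(-g + \hat g(0) - \mathcal{C}h)$ via \eqref{C2} (with $\|\mathcal{C}h\|_{L^1_{2\pi}}$ again estimated by Zygmund's inequality applied to $h$), a matching bound for $\|\mathcal{C}V\|_{L^1_{2\pi}}$, one concludes by \eqref{ZygOrlpos}. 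All constants produced this way depend only on $\varrho$ (through $\alpha$, through \eqref{strvarrho}, and through the absolute constants $A,B,A_0$), as required.
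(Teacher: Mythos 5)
Your skeleton coincides with the paper's (the splitting $2V=\mathcal{C}g-h$, the reduction via \eqref{ZygOrlpos} to $L^1$ bounds on $V$ and $\mathcal{C}V$ using $V\ge 0$, and $\mathcal{C}^2g=-g+\hat g(0)$), but the execution has a genuine gap at the central step. You apply Zygmund's $L\log L$ inequality with the factor $\lambda'(v)/\lambda(v)$ still inside the integrand, so the term $\ln_+|g|$ (and likewise $\ln_+|h|$) produces a factor $\ln_+\bigl(\nu(v)\|v'\|_{L^\infty}\bigr)\ge \ln_+\nu(v)$. This is \emph{not} $O_\varrho(\ln(2+\nu_0(v)))$: by \eqref{strvarrho} one only gets $\ln\nu(v)\le\ln m_2+\tfrac1\varrho\ln(2+\nu_0(v))$ (up to absolute constants), and the $\ln m_2$ term cannot be absorbed — in the Stokes case $m_2=2\mu$ and $\nu(v)=2\mu\,\nu_0(v)$, so for waves with $\nu_0(v)$ of order $1$ and $\mu$ large your chain yields a bound of size $\nu(v)\ln\nu(v)$ rather than $\nu(v)\ln(2+\nu_0(v))$, with a constant depending on $\mu$. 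That contradicts the requirement that $M_4$ depend only on $\varrho$ and destroys the uniformity in $\mu$ needed for Theorem \ref{Main}. The paper avoids this by exploiting homogeneity of the Orlicz norm \emph{before} any logarithm appears: $\|h\|_{\mathcal{B},[-\pi,\pi]}\le\nu(v)\,\|1+\mathcal{C}v'\|_{\mathcal{B},[-\pi,\pi]}$, and then $\|1+\mathcal{C}v'\|_{\mathcal{B},[-\pi,\pi]}\le 16\pi\ln\bigl(2+\sqrt{\nu_0(v)}\bigr)$ by \eqref{1infty}, using the facts from \cite{ST3} that $1+\mathcal{C}v'>0$ (so its $L^1$ norm is exactly $2\pi$) and $\|1+\mathcal{C}v'\|_{L^\infty_{2\pi}}\le\sqrt{\nu_0(v)}$; Zygmund enters only through \eqref{ZygOrlineq}, i.e. $\|\mathcal{C}h\|_{L^1_{2\pi}}\lesssim\|h\|_{\mathcal{B},[-\pi,\pi]}$. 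Moreover, for $\|V\|_{L^1_{2\pi}}$ no Zygmund inequality is needed at all: since $V\ge0$ and $\widehat{\mathcal{C}g}(0)=0$, one has $\|V\|_{L^1_{2\pi}}=\tfrac12\int_{-\pi}^{\pi}\frac{|\lambda'(v)|}{\lambda(v)}(1+\mathcal{C}v')\,dt\le\pi\nu(v)$, with no logarithm whatsoever — your detour through $\|\mathcal{C}g\|_{L^1_{2\pi}}$ is both unnecessary and the source of the spurious $\ln\nu(v)$.

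A second, related gap: you assert that ``the $L^1$-size of $g$ contributes the factor $\nu(v)$'', which presupposes $\|v'\|_{L^1_{2\pi}}=O(1)$ with an absolute constant. Nothing you cite gives this; the pointwise a priori bounds yield only $\|v'\|_{L^\infty_{2\pi}}\le\sqrt{\nu_0(v)}$ (hence $\|v'\|_{L^1_{2\pi}}\lesssim\sqrt{\nu_0(v)}$ directly), and the correct route $v'=-\mathcal{C}(\mathcal{C}v')$ combined with \eqref{ZygOrlineq} and \eqref{1infty} gives $\|v'\|_{L^1_{2\pi}}\lesssim\ln(2+\nu_0(v))$. In the paper's argument this logarithm is harmless (it is never multiplied by a second one), but in yours it compounds with the $\ln_+|g|$ factor to give $\nu(v)\ln^2$-type bounds. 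Also, the a priori input you guess at (Lemma 4.25 of \cite{ST3} giving a power bound on $\|v'\|_{L^\infty}$) is not the relevant one: Lemma 4.25 is what the lower bound (Lemma \ref{ST3est}) uses, while the present lemma needs $1+\mathcal{C}v'>0$, $\|1+\mathcal{C}v'\|_{L^\infty_{2\pi}}\le\sqrt{\nu_0(v)}$ and Lemma 4.24 of \cite{ST3}. As written, your proposal proves at best $\|V\|_{\mathcal{B},[-\pi,\pi]}\le C(\varrho,m_2)\,\nu(v)\bigl(\ln(2+\nu(v))+\ln(2+\nu_0(v))\bigr)$, which is strictly weaker than the statement.
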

\begin{proof}
Since  $v \in W^{1, p(\varrho)}_{2\pi}$ is a critical point of 
\eqref{Bernfunctional}, $1 + \mathcal{C} v' > 0$ and 
$\|1 + \mathcal{C} v'\|_{L^\infty_{2\pi}} \le \sqrt{\nu_0(v)}$ 
(see \cite[Theorems 2.4, 3.3 and 3.5]{ST3}).
Then it follows from \eqref{PlotPot}, \eqref{CFour} and \eqref{Bernnudef} that
$$
\|V\|_{L^1_{2\pi}} = \int_{-\pi}^\pi V(t)\, dt = 
 \int_{-\pi}^\pi \frac{|\lambda'(v(t))|}{2\lambda(v(t))} (1 + \mathcal{C} v'(t))\, dt \le
\pi \nu(v)  
$$
(see \cite[Lemma 4.24]{ST3}), and \eqref{1infty} implies
\begin{eqnarray*}
\|1 + \mathcal{C} v'\|_{\mathcal{B}, [-\pi, \pi]} \le  
8\ln\left(2 + \frac{2\pi\, \sqrt{\nu_0(v)}}{\|1 + \mathcal{C} v'\|_{L^1_{2\pi}}}\right)  
\int_{-\pi}^\pi (1 + \mathcal{C} v'(t))\, dt \\
= 16\pi \ln\left(2 + \sqrt{\nu_0(v)}\right) . 
\end{eqnarray*}
Hence
\begin{eqnarray*}
\|\mathcal{C} V\|_{L^1_{2\pi}} \le  
 \frac12 \left\|\frac{\lambda'(v)}{\lambda(v)}\, v'\right\|_{L^1_{2\pi}} + \frac12
\left\|\mathcal{C}\left(\frac{\lambda'(v)}{\lambda(v)} (1 + \mathcal{C} v')\right)\right\|_{L^1_{2\pi}} \\
\le \frac{\nu(v)}2\, \|v'\|_{L^1_{2\pi}} + \mbox{const}\, \nu(v) \|1 + \mathcal{C} v'\|_{\mathcal{B}, [-\pi, \pi]} \\
\le \mbox{const}\, \nu(v) \|1 + \mathcal{C} v'\|_{\mathcal{B}, [-\pi, \pi]} \le
\mbox{const}\, \nu(v) \ln(2 + \nu_0(v))
\end{eqnarray*}
with constants independent of $v$ and $\lambda$ (see \eqref{C2} and \eqref{ZygOrlineq}). 
It is now left to apply \eqref{ZygOrlpos}.
\end{proof}

\end{document}